\newenvironment{proofof}[1]{\begin{trivlist}\item[\hskip\labelsep{\it
Proof~of~{#1}.\ }]}{\hspace*{\fill} {\qedsymbol}\end{trivlist}}
\newcommand{\cons}{\ensuremath{{\sf Con}}}
\renewcommand{\Pr}{\ensuremath{{\sf Pr}}}
\newcommand{\code}[1]{\ensuremath{\ulcorner{#1}\urcorner}}
\newcommand{\numeral}[1]{\ensuremath{\underline{{#1}}}}
\newcommand{\bewijs}[1]{\ensuremath{\vdash_{{#1}}}}
\newcommand{\oo}[1]{\ensuremath{\mathcal{O}({#1})}}
\newcommand{\ohm}[1]{\ensuremath{\Omega({#1})}}
\newcommand{\isig}[1]{{\ensuremath {\mathrm{I}\Sigma_{#1}}}\xspace}
\newcommand{\sonetwo}{{\ensuremath{{\sf S^1_2}}}\xspace}
\newcommand{\eqref}[1]{(\ref{#1})}
\theoremstyle{plain}
\newtheorem{theorem}{Theorem}[section]
\newtheorem{lemma}[theorem]{Lemma}
\newtheorem{proposition}[theorem]{Proposition}
\newtheorem{definition}[theorem]{Definition}
\theoremstyle{remark}
\theoremstyle{question}
\newtheorem{question}[theorem]{Question}
\newtheorem{fact}[theorem]{Fact}
\newtheorem{conjecture}[theorem]{Conjecture}
\newtheorem{observation}[theorem]{Observation}
\title{Propositional proof systems and fast consistency provers}
\date{2006}
\author{Joost J. Joosten}
\begin{document}

\maketitle\footnote{This paper is partly written while employed at the Mathematical Institute of the Academy of Sciences of the Czech Republic in Prague. The stay in Prague was also financed by the Netherlands Organization for Scientific Research (NWO).}


\begin{abstract}
A fast consistency prover is a consistent poly-time axiomatized theory that has short proofs of the finite consistency statements of any other poly-time axiomatized theory. 
Kraj\'\i\v{c}ek and Pudl\'ak proved in \cite{KrajicekPudlak:propproofs} that the existence of an optimal propositional proof system is equivalent to the existence of a fast consistency prover. It is an easy observation that ${\sf NP}={\sf coNP}$ implies the existence of a fast consistency prover. The reverse implication is an open question.

In this paper we define the notion of an unlikely fast consistency prover and prove that its existence is equivalent to 
${\sf NP}={\sf coNP}$.

Next it is proved that fast consistency provers do not exist if one considers RE axiomatized theories rather than theories with an axiom set that is recognizable in polynomial time. 
\end{abstract}

\section{Introduction}
There are many interesting relations between computational complexity and arithmetic. In this paper we shall focus on one such relation that involves length of proofs of finite consistency statements. In particular, we shall study \emph{fast consitstency provers}. Basically a fast consistency prover, a facop for short, is a certain theory $S$
that has short proofs of the finite consistency statements of any other certain theory $T$. We shall see precise definitions shortly.

Kraj\'\i\v{c}ek and Pudl\'ak  proved in \cite{KrajicekPudlak:propproofs} that if there is no fast consistency prover, then ${\sf NP}\neq {\sf coNP}$. We shall plead that it is very unlikely that a facop can exist. It is an open question whether the existence of a facop is actually equivalent to ${\sf NP} = {\sf coNP}$. In Section 
\ref{section:UfacopsEnUppses} we shall define the notion of an 
\emph{unlikely fast consistency prover}, a ufacop for short, and show that the existence of a ufacop is equivalent to ${\sf NP} = {\sf coNP}$. \medskip

Before we shall plead that the existence of a facop is unlikely, let us first specify some definitions. In this paper, we shall always mean by the 
\emph{length} of a proof the number of symbols occurring in it. If $S$ is a theory, we shall denote by $S\vdash_n\varphi$ that $\varphi$ is provable in $S$ by a proof whose length does not exceed $n$. We shall denote the formalization/arithmetization of this statement by ${\sf Pr}_S(n,\ulcorner \varphi \urcorner)$. For those familiar with formalized provability it is good to stress that there is a logarithm involved here, that is,
\[
{\sf Pr}_S(x,y) \ := \ \exists \pi\ (\, |\pi|{\leq} x \ \wedge \ 
{\sf Proof}_S(\pi,y)). 
\]
Here ${\sf Proof}_S(x,y)$ is a natural arithmetization of ``$x$ is the G\"odel number of a proof in $S$ of a formula with G\"odel number $y$''.  
All theories considered in this paper will be first order theories of some minimal strength which are sound and hence consistent. With 
${\sf Con}_T(x)$ we shall denote $\neg{\sf Pr}_T(x,\ulcorner 0=1\urcorner)$.

If a theory $T$ has a set of axioms which is decidable/recognizable in polynomial time, we shall speak of a \emph{poly-time theory}. If $\varphi$ is provable in $S$, we shall denote by $||\varphi||_S$ the length of the shortest proof in $S$ of $\varphi$. If $n$ is a natural number, we shall denote by $\underline{n}$ denote its efficient (dyadic) numeral. We are now ready to give the definition of a fast consistency prover.

\begin{definition}\label{definition:facop}
A \emph{fast consistency prover} (facop) is a consistent poly-time theory $S$ such that for any other consistent poly-time theory $T$ there is a polynomial $p$ such that 
\[
||{\sf Con}_T(\underline{n})||_S \leq p(n).
\]
\end{definition}
Now, why is it hard to believe in the existence of a facop? First of all, let us remark that a facop is well defined. As, by our assumption, $T$ is consistent, we first remark that ${\sf Con}_T(\underline{n})$ is indeed true. But 
${\sf Con}_T(\underline{n})$ is also provable in $S$. This is because there are at most $2^n$ many proofs whose length are below $n$. So, in $S$ all this many proofs can be listed and combined with the observation that none of these proofs is a proof of $0=1$. 

This brings us directly to the question of how a facop could possibly exist. For, if $T$ is completely arbitrary, what else can $S$ do than just give the list of all possible proofs and remark that none is a proof of $0=1$.  For $T$ weaker than $S$ it seems conceivable that $S$ can do some smart tricks and summarize this long list. But, if $T$ is a lot stronger than $S$ it seems very strange that $S$ would have a short way of proving the finite consistency statements of $T$.

It is good to realize here that the poly-time axiomatizability is not directly saying anything about the proof strength of a theory. For example, a poly-time theory may contain an axiom 
${\sf Con}({\sf ZFC}+ \mbox{``there exists a superhuge cardinal''})$ or any other consistent large cardinal assumption that makes your head spin round.

But it seems hard to relate proof strength to the length of proofs of finite consistency statements. In Section \ref{section:FacopsAndOppses} we shall define a hypothetical facop $S$ (in the proof of Theorem \ref{theorem:facopiffopps}). This $S$ consists of a very weak fragment of arithmetic plus the assumption that some hypothetical propositional proof system only proves tautologies. All these ingredients seem to have little to do with proof strength.

The most tempting way to prove the non-existence of facops is by using diagonalization, that is, by using fixed points. In 
Section \ref{section:ReFacopsDoNotExist} we set up such an approach for RE-facops. An RE-facop is obtained by replacing ``poly-time'' in Definition 
\ref{definition:facop} by ``RE''. In particular we show that RE-facops do not exist. 

It is good to mention here a result by Pudl\'ak. In \cite{Pudlak:selfconsisI} and \cite{Pudlak:selfconsisII} he proved that for a large class of theories $T$, the $||{\sf Con}_T(\underline{n})||_T$ can be bounded by a polynomial in $n$.

In addition it is good to mention that questions about the length of proofs of finite consistency statements have an interest on themselves, not related to computational complexity. In particular they have a close relation to foundations of mathematics and possible partial realizations of Hilbert's program.

\section{Preliminaries}
In this section we provide the basic definitions that are needed further on in the paper. Probably it is best to just skip this section and turn to it if necessary. 

As mentioned in the introduction, in this paper we shall study a relation between arithmetic and computational complexity. By choosing/tailoring the arithmetic language in the right way there are straightforward correspondences. 

For this reason we shall in this paper always consider theories in the language of bounded arithmetic (see e.g.\ \cite{BusH2}). This language is an extension to the basic language of arithmetic in that it contains symbols for the binary logarithm $|x|$ and for the function $\omega_1(x)$. Here 
$\omega_1(x)= 2^{|x|^2}$. From now on, all arithmetic formulas in this paper will be in the language of bounded arithmetic. 

We shall employ the usual hierarchy of bounded formulas in this paper. Thus, 
$\Delta^b_0$ is the class of of formulas (in the language of bounded arithmetic) which contains all open formulas, and which which is closed under all boolean connectives and under sharply bounded quantification. Here sharply bounded quantification is quantification of the form $\forall \, x{\leq}|t|$ or $\exists \, x{\leq}|t|$. Here, $t$ is some term in the language of bounded arithmetic that does not contain $x$ as a variable.

Next, we define $\Delta_0^b = \Sigma_0^b = \Pi^b_0$. The 
$\Sigma_{i+1}^b$ formulas are those obtained by closing off the 
$\Pi^b_i$ formulas under bounded existential quantification, boolean connectives and sharply bounded universal and existential quantification. The 
$\Pi^b_{i+1}$ formulas are defined dually. Bounded quantification is
quantification of the form $\forall \, x{\leq}t$ or $\exists \, x{\leq}t$. Again, $t$ is some term in the language of bounded arithmetic that does not contain $x$ as a variable.

The language is chosen in such a way that there is a close correspondence between computational complexity classes and definable sets. We say that a formula $\alpha(x)$ defines a set of natural numbers $A$ if $x\in A \ \ \Leftrightarrow \ \ \mathbb{N}\models \alpha(x)$. It is not too hard to see the following correspondences.
\[
\begin{array}{lll}
A \mbox{ is $\Delta_0^b$ definable} & \Rightarrow & A \in {\sf P}\\
A \mbox{ is $\Sigma_1^b$ definable} & \Leftrightarrow & A \in {\sf NP}\\
A \mbox{ is $\Pi_1^b$ definable} & \Leftrightarrow & A \in {\sf coNP}\\
\end{array}
\]
This correspondence is pretty straightforward and can be easily continued through all the bounded formula complexity classes by using oracles. Note that for the complexity class ${\sf P}$ we have no equivalence. In order to get an equivalence some non trivial mathematics has to be applied. In particular, as a consequence of Buss' Witnessing Theorems we have the following.
\[
\begin{array}{lll}
A \mbox{ is $\Delta_1^b(S^1_2)$ definable} & \Leftrightarrow & A \in {\sf P}\\
\end{array}
\]
A formula is $\Delta_1^b(S^1_2)$ if it is in $S_2^1$ equivalent to both a 
$\Sigma^b_1$ formula and a $\Pi_1^b$ formula. Here, $S_2^1$ is a pretty weak arithmetic theory with other than the defining axioms of the symbols of the language of arithmetic a weak form of induction for $\Sigma_1^b$ formulas. We refer the reader to \cite{BusH2} for details. In the rest of this paper we shall often speak of $\Delta_1^b(S^1_2)$ theories instead of poly-time theories.
\medskip

We have seen one correspondence between arithmetic and complexity by the above definability results. Another correspondence goes via propositional proof systems as introduced by Cook and Reckhow in \cite{CookReckhow:proofs}. Let us briefly give the basic definitions and facts here.
\begin{definition}\label{defi:pps}
A propositional proof system, a pps for short, is a poly-time mapping from the set of all strings onto the set of all tautologies.
\end{definition}

All propositional proof systems we know, be it natural deduction, Gentzen or whatever, can be seen as a pps by mapping a string of syntax that is not a proof in this particular system to the tautology 1 and by mapping a string that is a proof to the tautology it proves. Checking whether a string is a proof or not is for all known proof systems poly-time (even cubic time would suffice, as to get parsing of contex-free grammars).

An easy correspondence between propositional proof systems and complexity is given by Theorem \ref{theorem:CookReckhow} which is due to Cook and Reckhow and relates the existence of so-called super proof systems to ${\sf NP}={\sf coNP}$.

\begin{definition}
A pps $P$ is called \emph{super} if there is a polynomial $p$ such that 
\[
\forall^{\sf Taut} \tau \, \exists \,  |\pi | {<} p(|\tau |) \ P(\pi,\tau).
\]
\end{definition}

\begin{theorem}\label{theorem:CookReckhow}
${\sf NP}={\sf coNP}$ if and only if there exists a super pps.
\end{theorem}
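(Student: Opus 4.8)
The plan is to prove the Cook--Reckhow theorem by establishing both directions via the standard translation between the complexity of the set $\taut{}$ of tautologies and the existence of short proofs in a propositional proof system. The key observation underlying everything is that $\taut{}$ is always in $\compclass{coNP}$ (a string $\tau$ is a tautology iff \emph{all} assignments satisfy it, and a \emph{falsifying} assignment is a poly-size witness of non-tautologyhood, so the complement is in $\compclass{NP}$), whereas $\taut{} \in \compclass{NP}$ is the nontrivial half that we want to extract.

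First I would prove the forward direction, that $\compclass{NP}=\compclass{coNP}$ implies a super pps exists. Since $\taut{} \in \compclass{coNP}$ always, the hypothesis $\compclass{NP}=\compclass{coNP}$ gives $\taut{} \in \compclass{NP}$, so there is a nondeterministic poly-time machine $M$ accepting exactly the tautologies, equivalently a polynomial $q$ and a poly-time predicate $R(\tau,w)$ such that $\tau$ is a tautology iff there exists a witness $w$ with $|w| < q(|\tau|)$ and $R(\tau,w)$. I would then define a pps $P$ essentially by decoding its input string as a pair $(\tau,w)$ and setting $P(w,\tau)=\tau$ whenever $R(\tau,w)$ holds, and mapping everything else to the fixed tautology $1$. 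One checks $P$ is poly-time and that its range is exactly $\taut{}$ (soundness because $R$ only accepts genuine witnesses, completeness because every tautology has a witness), so $P$ is a pps; and it is super because every tautology $\tau$ has a witness $w$ of length below $q(|\tau|)$, giving a $P$-proof of size polynomial in $|\tau|$.

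For the reverse direction, suppose $P$ is a super pps with witnessing polynomial $p$. I would show $\taut{} \in \compclass{NP}$: to accept $\tau$, nondeterministically guess a string $\pi$ of length less than $p(|\tau|)$ and verify in poly-time that $P(\pi)=\tau$ (this check is poly-time since $P$ is a poly-time map). Correctness follows from the definition of super: if $\tau$ is a tautology some such short $\pi$ exists, and if $P(\pi)=\tau$ then $\tau$ is in the range of $P$, hence a tautology. Thus $\taut{} \in \compclass{NP}$. Combined with $\taut{} \in \compclass{coNP}$ and the $\compclass{coNP}$-completeness of $\taut{}$ (via Cook's theorem, $\compclass{SAT}$ is $\compclass{NP}$-complete so its complement $\taut{}$ is $\compclass{coNP}$-complete), this yields $\compclass{coNP} \subseteq \compclass{NP}$, and since the reverse inclusion follows by symmetry (or by a complementation argument), we conclude $\compclass{NP}=\compclass{coNP}$.

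The genuinely load-bearing step, and the one I expect to require the most care, is the reverse direction's use of $\compclass{coNP}$-completeness of $\taut{}$: membership $\taut{}\in\compclass{NP}$ only collapses the two classes once we know $\taut{}$ is $\compclass{coNP}$-complete, which rests on the Cook--Levin theorem. The forward direction is comparatively routine bookkeeping about encoding pairs and verifying the poly-time bounds, and in fact most of the subtlety there is cosmetic: ensuring the map is \emph{onto} all tautologies (handled by the fixed default output $1$) and total. I would present the $\compclass{coNP}$-completeness as a cited consequence of Cook--Levin rather than reproving it.
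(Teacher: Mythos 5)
Your proof is correct, and it is precisely the canonical Cook--Reckhow argument: the paper itself states this theorem without proof, citing \cite{CookReckhow:proofs}, so there is no in-paper proof to diverge from, and your write-up supplies exactly the cited argument (witnesses for ${\sf Taut}\in{\sf NP}$ become short proofs in a constructed pps; conversely a super pps puts ${\sf Taut}$ in ${\sf NP}$, and ${\sf coNP}$-completeness of ${\sf Taut}$ plus complementation collapses the classes). One cosmetic slip worth fixing: ${\sf Taut}$ is not literally the set-theoretic complement of ${\sf SAT}$ --- its ${\sf coNP}$-completeness comes via the reduction $\varphi\mapsto\neg\varphi$ from unsatisfiability to tautologyhood --- but that is the standard fact you intend, and the argument is unaffected.
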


It is important to compare different pps's to each other in terms of the 
\emph{size}, i.e. \emph{length} of the proofs, which is nothing but the total number of symbols occurring in it. If $\pi$ is a proof, we shall denote its length by $|\pi |$. This suggests a logarithmic relation which is good: the length of a string over a finite alphabet is, under efficient coding, linear in the binary logarithm of the code of that string.

\begin{definition}
Let $P$ and $Q$ be pps's and let $f$ be a function. We define:\\
\begin{itemize}
\item[-]
$P\geq_{f(x)}Q \ \ := \ \ Q(\pi,\tau) \to \exists  \pi'\ (|\pi' | \leq f (|\pi|) \wedge P(\pi', \tau))$

\item[-]
$P \geq Q \ \ := \ \ \mbox{ for some polynomial $p$, } \ P \geq_{p(x)}Q$. In this case we say that $P$ \emph{polynomially simulates} $Q$.

\item[-]
$P\equiv Q \ \ := \ \ (P\geq Q) \ \ \& \ \ (Q\geq P)$ 
\end{itemize}

\end{definition}

Throughout this paper we shall assume that our binding polynomials are monotone increasing which is not an essential assumption, but makes the proofs easier.

In all known propositional proof systems it holds that the tautology is at most as long as the proof of the tautology. This does not follow from the general definition of a pps. However, the following lemma tells us that we, for many purposes, may assume without loss of generality that, indeed, the proof of a tautology is at most as long as the tautology.

\begin{lemma}\label{lemma:tautsmallerthanproof}
For every pps $P$, there is a pps $P'$ such that $P'\equiv P$ and $P'(\pi,\tau)\to |\tau|\leq |\pi|$.
\end{lemma}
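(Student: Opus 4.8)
The plan is to transform every $P$-proof into a $P'$-proof by padding it until it is at least as long as the tautology it proves. The construction is legitimate precisely because the length of the output tautology $\tau = P(\pi)$ is polynomially bounded in $|\pi|$: since $P$ is computed in polynomial time, its value on $\pi$ is produced in polynomially many steps and hence $|\tau| \leq q(|\pi|)$ for some fixed polynomial $q$. This single observation is what keeps the required padding polynomially bounded, and it is the one point on which the whole argument really turns.

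Concretely, I would fix a poly-time computable and poly-time invertible pairing $\langle \cdot,\cdot\rangle$ with $|\langle \pi,w\rangle| = |\pi| + |w| + O(\log|\pi|)$, and define $P'$ on input $\sigma$ as follows: if $\sigma$ decodes as a pair $\langle \pi,w\rangle$, compute $\tau := P(\pi)$ and output $\tau$ whenever $|\sigma| \geq |\tau|$; in every remaining case (decoding fails, or $|\sigma| < |\tau|$) output the fixed tautology $1$. Then $P'$ is a pps: it is poly-time, being the composition of poly-time decoding, the poly-time map $P$, and a length comparison; and it is onto the tautologies, since for any tautology $\tau$ one takes a $P$-proof $\pi$ with $P(\pi)=\tau$ (available because $P$ is surjective) and pads with $w$ large enough that $|\langle \pi,w\rangle|\geq|\tau|$, so that $P'(\langle\pi,w\rangle)=\tau$, while $1$ is hit as well.

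By construction every nonempty $P'$-proof $\sigma$ of a tautology $\tau$ satisfies $|\tau|\leq|\sigma|$, which is the desired shortness property; the empty string is the sole degenerate exception (any tautology has positive length) and is immaterial to polynomial simulation. It then remains to check $P'\equiv P$. For $P'\geq P$: given $P(\pi)=\tau$, the padded string $\sigma=\langle\pi,w\rangle$ of length $\max(|\pi|,|\tau|)+O(\log|\pi|)\leq q(|\pi|)+O(\log|\pi|)$ is a $P'$-proof of $\tau$ whose length is polynomial in $|\pi|$, using exactly the bound $|\tau|\leq q(|\pi|)$. For $P\geq P'$: given a $P'$-proof $\sigma$ of $\tau$, either $\tau\neq 1$, so $\sigma$ decoded to some $\langle\pi,w\rangle$ with $P(\pi)=\tau$ and $|\pi|\leq|\sigma|$, whence $\pi$ itself is a short $P$-proof; or $\tau=1$, for which one fixes once and for all a $P$-proof of $1$ of constant length. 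Both directions are linear, hence polynomial, so $P'\equiv P$.

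The main obstacle — indeed essentially the only nonroutine step — is guaranteeing that the padding stays polynomially bounded, equivalently that a pps cannot prove a tautology super-polynomially longer than its proof. I expect this to be handled entirely by the poly-time computability of $P$, as above, and the rest of the verification to be mechanical.
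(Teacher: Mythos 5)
Your proof is correct and turns on exactly the same key observation as the paper's: since $P$ is computed in polynomial time, $|\tau|\leq q(|\pi|)$ for a fixed polynomial $q$, so padding a $P$-proof up to the length of the tautology it proves costs only polynomially, and both simulations are then immediate. The paper's construction is simply a slicker instance of your padding --- it sets $P'(\pi',\tau):\Leftrightarrow [\pi'=\pi\smallfrown\tau]\wedge P(\pi,\tau)$, using the tautology itself as the pad so that $|\tau|\leq|\pi'|$ holds by containment with no runtime length check or pairing machinery --- and your empty-string caveat is a degenerate coding convention that the paper's construction silently leaves to the same convention (strings of the wrong shape must still be mapped to some tautology such as $1$).
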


\begin{proof}
From $P$ we define $P'$ as 
\[
P'(\pi',\tau) : \Leftrightarrow 
[\pi' = (\pi{\smallfrown}\tau)] \wedge P(\pi,\tau)
\]
where $\smallfrown$ denotes concatenation. Clearly $P\geq_xP'$. If now $P(\pi,\tau)$, we can retrieve $\tau$ from $\pi$ in poly-time, so, certainly $|\tau| \leq p'(\pi)$ for some polynomial $p'$. Consequently, $|\pi{\smallfrown}\tau|\leq p(\pi)$ for some polynomial $p$ and $P'\geq P$.
\end{proof}

We shall often identify a pps and its $\Delta^b_1(S_2^1)$ definition in bounded arithmetic.
The following definition is central to the rest of this paper.
\begin{definition}
A pps $P$ is an \emph{optimal propositional proof system}, an opps for short, if 
$P\geq Q$ for any propositional proof system $Q$.
\end{definition}
It is easy to see that a pps is optimal whenever it is super. Thus via Cook and Reckhow's theorem (Theorem \ref{theorem:CookReckhow}) we get that 
\[
{\sf NP}={\sf coNP} \Rightarrow \mbox{there exists an opps}.
\]
It is an open question whether the converse implication holds.

\section{Fast consistency provers and optimal propositional proof systems}\label{section:FacopsAndOppses}

Kraj\'\i\v{c}ek and Pudl\'ak proved that the existence of an opps is equivalent to the existence of a facop. In this section we shall give a self contained version of this proof. The next section will then build forth on this proof to obtain a similar result.

\begin{theorem}\label{theorem:facopiffopps}
\ \ $\exists  \mbox{\ facop}\ \ \Longleftrightarrow \ \ \exists \mbox{\ opps}$
\end{theorem}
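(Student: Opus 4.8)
The plan is to prove both directions of the equivalence, each by an explicit construction. For the direction $\exists\,\text{opps} \Rightarrow \exists\,\text{facop}$, I would take an optimal propositional proof system $P$ and build a theory $S$ by adjoining to a weak base theory (such as $\sonetwo$, or even just $\ea$) a single additional axiom asserting the \emph{soundness} of $P$, namely the $\Pi^b_1$-statement $\forall \pi\forall\tau\,(P(\pi,\tau)\to \taut{\tau})$ expressing that everything $P$ proves is a tautology. This $S$ is a consistent poly-time theory because $P$ is sound in the standard model and its axiom set remains $\Delta^b_1(\sonetwo)$. The task then is to show that for an arbitrary consistent poly-time theory $T$, the finite consistency statements $\cons_T(\numeral{n})$ have proofs in $S$ of polynomial length.

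The key step in that direction is the translation, going back to Cook and Reckhow and to the arithmetization of propositional logic, that converts the arithmetic statement $\cons_T(\numeral{n})$ into a sequence of propositional tautologies $\tau_{T,n}$ whose sizes are polynomial in $n$, together with the fact that any consistent poly-time theory $T$ induces (via this translation) its own pps $P_T$ that proves these $\tau_{T,n}$ by short proofs. Concretely, I would first observe that $\cons_T(\numeral{n})$, being a bounded statement quantifying over the at-most-$2^n$ potential proofs of length below $n$, has a natural propositional encoding as a family of tautologies of polynomial size. Because $P$ is optimal it polynomially simulates $P_T$, so $P$ itself has polynomial-size proofs $\pi_n$ of $\tau_{T,n}$. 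Now inside $S$ one reasons: the soundness axiom for $P$ together with the \emph{provable} existence of the short proof $\pi_n$ (which can be exhibited because $\numeral{n}$ is an efficient numeral and $\pi_n$ has polynomial size) yields a polynomial-length $S$-proof that $\tau_{T,n}$ is a tautology, and then the reflection/translation lemma converts this back into a polynomial-length $S$-proof of $\cons_T(\numeral{n})$.

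For the converse $\exists\,\text{facop} \Rightarrow \exists\,\text{opps}$, I would start from a facop $S$ and define a candidate optimal pps $P$ by reversing the above correspondence: let $P$ accept a pair $(\pi,\tau)$ when $\pi$ codes an $S$-proof (of length matching $\tau$) of the arithmetic reflection statement corresponding to $\tau$, padding and using Lemma \ref{lemma:tautsmallerthanproof} to keep proofs at least as long as tautologies. One checks $P$ is a genuine pps (poly-time and onto the tautologies, using soundness of $S$). To see $P$ is optimal, take any pps $Q$; the consistency of $Q$ as a poly-time object lets one form a poly-time theory $T_Q$ whose finite consistency statements encode the tautologies provable by $Q$, and the facop property gives $S$ polynomial-size proofs of $\cons_{T_Q}(\numeral{n})$, which translate into polynomial-size $P$-proofs of the corresponding tautologies, yielding $P\geq Q$.

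The hard part will be the precise two-way translation between the arithmetic finite consistency statements $\cons_T(\numeral{n})$ and the propositional tautologies, and verifying that it is length-efficient in both directions and formalizable in the weak base theory. In particular, I expect the main obstacle to lie in showing that the bounded reflection reasoning inside $S$ (deriving $\cons_T(\numeral{n})$ from the soundness axiom plus a short witnessed $P$-proof) genuinely stays polynomial in $n$ — this requires the efficient dyadic numeral $\numeral{n}$ and careful bookkeeping so that the logarithmic factor implicit in the definition of $\Pr_S(x,y)$ does not blow up the proof length. The onto-ness and poly-time checkability of the constructed $P$, and the matching of proof lengths via Lemma \ref{lemma:tautsmallerthanproof}, are the remaining technical points that must be handled carefully but present no conceptual difficulty.
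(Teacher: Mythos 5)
Your proposal is correct and follows essentially the same route as the paper's proof (the Kraj\'\i\v{c}ek--Pudl\'ak argument): for one direction you take $S := S^1_2 + {\sf RFN}(P)$ and use optimality of $P$ on the polynomial-size propositional translations of the finite consistency statements, recovering $\cons_T(\numeral{n})$ inside $S$ via reflection and the adequacy of the translation; for the other you define $P(\pi,\tau)$ via $S$-proofs of $\taut{\tau}$ and use the auxiliary theory $T_Q = S^1_2 + {\sf RFN}(Q)$, whose short finite consistency proofs in the facop $S$ yield $\taut{\tau}$ by provable $\Sigma^b_1$-completeness. The only cosmetic differences are your packaging of the translated consistency tautologies into a pps $P_T$ induced by $T$ (the paper adjoins them directly as axioms to $P$, forming $Q$) and your invocation of Lemma \ref{lemma:tautsmallerthanproof}, which is not needed for this theorem.
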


Before we can present a proof of this theorem, we should first mention some results involving length of proofs and discuss some coding machinery.

\begin{definition}
A relation $R$ is polynomially numerable in a theory $T$ if for some polynomial $p$ and some formula $\rho$ we have that
\[
R(x) \ \Leftrightarrow  \ T\vdash \rho (\underline{x}) \ 
\Leftrightarrow \ T \vdash_{p(|x|)}\rho (\underline{x}) .
\]
\end{definition}
It is good to stress here that $\underline{x}$ denotes the efficient numeral of $x$ so that the length of $\underline{x}$ is logarithmic in $x$.
\begin{theorem}\label{theorem:NPenumerable}
The following are equivalent.
\begin{enumerate}
\item
$R\in {\sf NP}$

\item
$R$ is polynomially numerable in robinson's arithmetic $\sf R$
\end{enumerate}
\end{theorem}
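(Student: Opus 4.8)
The plan is to prove the two implications separately, the direction $(2)\Rightarrow(1)$ being routine and $(1)\Rightarrow(2)$ resting on a quantitative form of the $\Sigma_1$-completeness of ${\sf R}$. For $(2)\Rightarrow(1)$, assume $R$ is polynomially numerable in ${\sf R}$ via a formula $\rho$ and polynomial $p$, so $R(x)$ holds iff ${\sf R}\vdash_{p(|x|)}\rho(\underline x)$. I would exhibit an ${\sf NP}$ procedure whose witness on input $x$ is a proof $\pi$: the verifier computes the efficient numeral $\underline x$ (in poly-time), forms $\rho(\underline x)$ (of size $O(|x|)$, as $\rho$ is fixed and $|\underline x|=O(|x|)$), and checks that $\pi$ is a legal ${\sf R}$-proof of $\rho(\underline x)$. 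Since ${\sf R}$ is a poly-time theory, its proof predicate is poly-time decidable, so the check runs in polynomial time; and an accepting $\pi$ need only be sought among strings of length $\le p(|x|)$, which is polynomial in $|x|$. Hence $R\in{\sf NP}$.

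For $(1)\Rightarrow(2)$ I must manufacture a suitable $\rho$. By the correspondence recorded in the Preliminaries, $R\in{\sf NP}$ is $\Sigma_1^b$-definable; concretely I use the tableau presentation $R(x)\Leftrightarrow\exists c{\le}t(x)\,{\sf Comp}(x,c)$, where ${\sf Comp}(v,c)$ expresses that $c$ codes an accepting run, of size polynomial in $|x|$, of a fixed nondeterministic poly-time machine for $R$. What matters is that ${\sf Comp}$ is \emph{sharply} bounded ($\Delta_0^b$): its quantifiers range only over cell positions $i{\le}|c|$. I set $\rho(v):=\exists c{\le}t(v)\,{\sf Comp}(v,c)$. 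Two of the three implications defining polynomial numerability are then free: since ${\sf R}$ is sound, ${\sf R}\vdash\rho(\underline x)$ yields $\mathbb N\models\rho(\underline x)$, i.e. $R(x)$; and $\vdash_{p(|x|)}$ trivially entails $\vdash$. Everything thus reduces to: whenever $R(x)$, produce an ${\sf R}$-proof of $\rho(\underline x)$ of length polynomial in $|x|$.

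The crux is a quantitative sharpening of the $\Sigma_1$-completeness of ${\sf R}$, which I would isolate as a lemma: \emph{if $\psi$ is a true sharply bounded sentence written with efficient numerals of total length $\ell$, then ${\sf R}\vdash\psi$ by a proof of length polynomial in $\ell$}. Granting it, the construction is short. When $R(x)$ holds, fix an accepting computation $c_0$ with $|c_0|$ polynomial in $|x|$; its efficient numeral $\underline{c_0}$ also has length polynomial in $|x|$. Then ${\sf Comp}(\underline x,\underline{c_0})$ and $\underline{c_0}{\le}t(\underline x)$ are true sharply bounded sentences with numerals of total length polynomial in $|x|$, so by the lemma each has an ${\sf R}$-proof of length polynomial in $|x|$. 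A single (bounded) existential-introduction step combines them into ${\sf R}\vdash\rho(\underline x)$, still of length $p(|x|)$ for a suitable polynomial $p$. This gives $R(x)\Rightarrow{\sf R}\vdash_{p(|x|)}\rho(\underline x)$ and closes the equivalence.

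I expect the lemma to be the real work, and I would prove it by external induction on the build-up of $\psi$. Atomic steps — (dis)equalities and inequalities between closed terms, and the evaluation of $+,\cdot,|\cdot|,\omega_1$ on numerals — are essentially handed over by the numeral axiom schemes of ${\sf R}$ (e.g. $\underline m+\underline n=\underline{m+n}$, and $\underline m\neq\underline n$ for $m\neq n$), each instance of size polynomial in $\ell$. Boolean connectives add only polynomial overhead. The delicate case is a sharply bounded quantifier $\forall z{\le}|s|$: here I invoke the ${\sf R}$-axiom $\forall z(z{\le}\underline k\to z{=}\underline 0\vee\cdots\vee z{=}\underline k)$ with $\underline k$ the concretely computed value of $|s|$, splitting into the cases $z=\underline 0,\dots,z=\underline k$. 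This is exactly where \emph{sharp} boundedness is indispensable: since $k=|s|$ is only polynomial in $\ell$ (not exponential, as a genuine bounded quantifier would allow), the split produces only polynomially many cases, each discharged by the induction hypothesis with a polynomial-size subproof, and the polynomially many polynomial-size pieces sum to a polynomial-size proof. Keeping careful track of these bounds so that they compose to a single polynomial is the main obstacle; everything else is bookkeeping.
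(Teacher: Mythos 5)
Your skeleton is the standard one, and it matches what the paper itself indicates (the paper gives no self-contained proof, only a citation to Pudl\'ak's Handbook plus two remarks): your $(2)\Rightarrow(1)$ by guessing the short proof is exactly the ``easy'' direction that, as the paper notes, works for any poly-time axiomatized theory; and your reduction of $(1)\Rightarrow(2)$ to a quantitative completeness lemma for sharply bounded sentences, with the case-split over $z\le\underline{k}$ producing only polynomially many disjuncts, is the right shape and correctly identifies why \emph{sharp} boundedness is what keeps the induction polynomial.

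There is, however, a genuine gap, and it sits precisely where the paper flags the difficulty: the remark that to get ``really as low as $\sf R$ \ldots some additional tricks with definable cuts are needed.'' Your base case asserts that true closed atomic sentences --- evaluations of $+,\cdot,|\cdot|,\omega_1$ on efficient numerals, and inequalities such as $\underline{c_0}\le t(\underline{x})$ --- are ``essentially handed over by the numeral axiom schemes of $\sf R$.'' But the axiom schemes of Robinson's $\sf R$ concern \emph{unary} numerals $S\cdots S0$; an instance such as $\underline{m}+\underline{n}=\underline{m+n}$ for \emph{dyadic} numerals is not an axiom, and since $\sf R$ proves no universal algebraic laws whatsoever (no associativity, commutativity, or distributivity --- only numeral instances), you cannot justify the binary addition and multiplication algorithms by short equational reasoning. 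Rewriting dyadic numerals into unary ones to invoke the actual axioms gives proofs exponential in $\ell$. The problem is starkest for $\underline{c_0}\le t(\underline{x})$: the value of $t(\underline{x})$ may be of order $2^{\mathrm{poly}(\ell)}$ (e.g.\ via $\omega_1$), so the case-list axiom $\forall z\,(z\le\underline{k}\to z=\underline{0}\vee\cdots\vee z=\underline{k})$ for $k=\mathrm{val}(t(\underline{x}))$ has exponentially many disjuncts and is useless; yet your lemma's quantifier case silently relies on such instances being available cheaply at the atomic level. Producing polynomial-size $\sf R$-proofs of these closed atomic facts is the actual mathematical content of the theorem, and the known solution goes through definable cuts on which enough arithmetic becomes available. (A smaller point of the same flavor: you should also check that ${\sf Comp}$ can be taken sharply bounded in the paper's language, since bit-extraction from the code $c$ is not obviously expressible without genuinely bounded quantifiers; the standard strict $\Sigma^b_1$ representation theorem handles this, but it deserves mention.) As written, your induction is sound and your argument does prove the theorem for theories that already have feasible dyadic numeral arithmetic --- $S^1_2$, or anything extending it --- but for $\sf R$ itself the ``bookkeeping'' you defer at the atomic base case is exactly the hard part, not a routine verification.
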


\begin{proof}
A proof of this theorem can be found in \cite{Pudlak98}. The $\Leftarrow$ is easy and actually holds for any poly-time axiomatized theory $T$.

The $\Rightarrow$ direction goes by coding of computations on Turing machines. To get really as low as $\sf R$ here, some additional tricks with definable cuts are needed. 
\end{proof}

If $R\in {\sf NP}$, it is definable by a $\Sigma$ (even $\Sigma_1^b$) formula $\rho$ and thus, for any (sound, poly-time axiomatized) theory $T$ extending $Q$ we have that 
\[
\begin{array}{ll}
\mbox{$R$ is polynomially numerable by $\rho$ in $Q$ }& \Leftrightarrow\\
\mbox{$R$ is polynomially numerable by $\rho$ in $T$ }& \ \\
\end{array}
\]
Having this in mind, we can consider provable $\Sigma_1^b$-completeness as expressed in the next theorem as a formalization of the above 
(Theorem \ref{theorem:NPenumerable} plus remark).

\begin{theorem}\label{theorem:sigmacomplete}
Let $T$ be a $\Delta_1^b(S_2^1)$ theory extending $S_2^1$. For every 
$\Sigma_1^b$ formula $\sigma (x)$, there is a polynomial $p$ such that 
\[
S_2^1 \vdash \forall x\ 
(\sigma (x) \to {\sf Pr}_T(p(|x|), \ulcorner \sigma (\dot{x })\urcorner)).
\]
\end{theorem}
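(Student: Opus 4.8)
The plan is to prove the statement by external induction on the logical structure of the fixed formula $\sigma$, viewed as a $\Sigma_1^b$ formula built up from a $\Delta_0^b$ matrix by sharply bounded quantifiers, bounded existential quantification, and the positive connectives $\wedge,\vee$ (all negations being pushed down into the $\Delta_0^b$ atoms, which is harmless since $\Delta_0^b$ is closed under negation). For each such $\sigma$ this external induction produces a concrete polynomial $p_\sigma$ together with an $S_2^1$-proof of $\forall x\,(\sigma(x)\to \Pr_T(p_\sigma(|x|),\code{\sigma(\dot x)}))$. Throughout, the crucial budgetary fact is that whenever a value $y$ is bounded by a term $t(x)$, its efficient numeral has length at most $|t(x)|+1$, which is polynomial in $|x|$; this is what keeps every witness, and hence every proof we assemble, of polynomial size.

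The base case is the heart of the argument and is where I expect the main obstacle. For an atomic formula such as $s(x)=t(x)$ or $s(x){\leq}t(x)$ one must show that $S_2^1$ proves, by a proof of length polynomial in $|x|$, that the closed terms $s(\dot x)$ and $t(\dot x)$ evaluate to their correct numerical values. This reduces to an evaluation lemma: for every term $t$ there is a polynomial $q$ with $S_2^1\vdash_{q(|x|)} t(\dot x)=\underline{t(x)}$, which one proves by induction on the construction of $t$ using the defining axioms for $+,\cdot,|\cdot|$ and $\omega_1$, together with the fact that the efficient numerals admit polynomial-length proofs of the basic arithmetic identities (such as $\underline m+\underline n=\underline{m+n}$). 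Granting the evaluation lemma, a true atomic statement is reduced to a true identity or inequality between efficient numerals, which again has a short proof, and these proofs are inherited by $T\supseteq S_2^1$; closing the $\Delta_0^b$ matrix under $\neg,\wedge,\vee$ and the sharply bounded quantifiers is then a routine subinduction, since a sharply bounded quantifier ranges over only $|t(x)|+1$ many values.

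For the inductive steps the conjunction and disjunction cases are immediate: we concatenate (resp.\ select and weaken) the shorter proofs supplied by the induction hypothesis, adding only a polynomial overhead. For a bounded existential $\sigma=\exists y{\leq}t\,\sigma_0$, whenever $\sigma(x)$ holds there is a witness $b\leq t(x)$ with $\sigma_0(x,b)$; inside $S_2^1$ we take such a $b$, invoke the induction hypothesis to get a short proof of $\sigma_0(\dot x,\dot b)$, note that $|\underline b|\leq|t(x)|+1$ is polynomial in $|x|$, and prepend the single logical inference to $\exists y{\leq}\underline{t(x)}\,\sigma_0(\dot x,y)$. The most delicate step is the sharply bounded universal quantifier $\sigma=\forall y{\leq}|t|\,\sigma_0$: here the proof of $\sigma(\dot x)$ must glue together the $|t(x)|+1$ many proofs of the instances $\sigma_0(\dot x,\underline 0),\dots,\sigma_0(\dot x,\underline{|t(x)|})$, and to carry this construction out \emph{inside} $S_2^1$ one performs $\Sigma_1^b$ length induction (available in $S_2^1$) on the bound $k\leq|t(x)|$, maintaining a $\Sigma_1^b$ statement asserting the existence of a short proof of the partial conjunction of the instances for $y\leq k$. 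This is precisely the point where the hypothesis $T\supseteq S_2^1$ and the $\Sigma_1^b$-induction strength of $S_2^1$ are genuinely used, and verifying that the accumulated proof length stays polynomial is the main piece of bookkeeping one has to control.
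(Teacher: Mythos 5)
The paper itself offers no proof of Theorem \ref{theorem:sigmacomplete}: it is stated as a known black box (Buss's provable $\Sigma^b_1$-completeness), presented as the formalized counterpart of Theorem \ref{theorem:NPenumerable} with the reader implicitly referred to the standard literature (\cite{BusH2}, \cite{Pudlak98}). So there is no ``paper proof'' to match; what you have done is reconstruct the standard argument, and in outline your reconstruction is correct: external induction on the structure of $\sigma$ (with negations pushed to the $\Delta^b_0$ atoms, which is the right reading of the paper's somewhat loose definition of $\Sigma^b_1$), an evaluation lemma $S^1_2\vdash_{q(|x|)} t(\dot{x})=\underline{t(x)}$ for the atomic case, substitution of the witness's numeral for bounded existentials, and concatenation of the $|t(x)|+1$ instance proofs for sharply bounded universals, all carried out \emph{inside} $S^1_2$ via $\Sigma^b_1$-LIND. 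Two points that your sketch leaves implicit deserve to be made explicit in a full writeup. First, for the internal LIND statement (and indeed for ${\sf Pr}_T(p(|x|),\cdot)$ itself) to be a legitimate $\Sigma^b_1$ formula, the G\"odel number of a proof of length $p(|x|)$ must be bounded by a \emph{term} of the language; this is exactly what finitely many iterations of $\omega_1(x)=2^{|x|^2}$ provide, and it is the reason the theory is formulated in the language of bounded arithmetic at all. Second, in the sharply bounded universal case, gluing the proofs of $\sigma_0(\dot{x},\underline{0}),\dots,\sigma_0(\dot{x},\underline{|t(x)|})$ into a proof of $\forall\, y{\leq}|t(\dot{x})|\ \sigma_0(\dot{x},y)$ requires, besides the concatenation you describe, a short proof of the case split $y\leq |t(\dot{x})|\to (y=\underline{0}\vee\dots\vee y=\underline{|t(x)|})$, a further routine but necessary piece of numeral machinery alongside the evaluation lemma. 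Neither point is an obstacle --- both are handled in the standard treatment --- so your proposal is a sound, if compressed, version of the accepted proof.
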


Coding of syntax for propositional logic in arithmetic can be done in a standard way. If $\tilde{a}$ is a sequence of zeroes and ones and $\varphi(\vec{p})$ a propositional formula, there is a $\Delta_1^b(S^1_2)$ formula saying that 
$\varphi(p_i/(\tilde{a})_i)$ evaluates to one. We shall write
\[
\tilde{a}\models \varphi.
\]
There is a $\Pi^b_1$ formula ${\sf Taut}(\tau)$ saying that ${\sf Taut} (p_0,\ldots,p_n)$ is a tautology. This formula is defined as 
\[
{\sf Taut}(\tau) \ := \ \forall\, |a|{\leq}(n{+}1)\  \tilde{a}\models \tau .
\]
If $Q$ is a pps, we shall denote by ${\sf RFN}(Q)$ the formalized reflection over $Q$, that is, the following $\forall \Pi_1^b$ formula saying that all provable formulas are true.
\[
{\sf RFN}(Q)\ :=\ \forall \tau\ (\exists \pi\ Q(\pi,\tau)\to {\sf Taut}(\tau))
\]

In a sense, we can even code arithmetic (and a fortiori syntax) into propositional logic. This is expressed in the following lemma.

\begin{lemma}\label{lemma:coding}
There exists a translation of $\Pi_1^b$-formulas $\varphi(x)$ in the language of bounded arithmetic into series of propositional formulas $||\varphi||^m$ such that: 
\begin{enumerate}
\item 
The translation preserves the structure of $\varphi$. E.g., 
$||\chi \wedge \psi||^m=||\chi||^m \wedge ||\psi||^m$, where $\chi$ and $\psi$ are subformulas of $\varphi$.

\item 
The translation $||\varphi||^m$ contains variables $\vec{q}$ and 
$p_0,\ldots, p_m$.

Instead of writing in the arithmetical correct way that 
$\varphi$ is a tautology when the binary representation $\tilde{a}$ is substituted for the $p_i$, i.e., 
\[
{\sf Taut}(||\varphi||^m(\vec{q}, \vec{p}/\tilde{a})),
\] 
we shall use the following shorthand notation.
\[
\tilde{a}\models ||\varphi||^m
\]

\item \label{lemma:coding:item:adequate}
The translation is provably adequate in the following sense.
\[
S_2^1 \vdash \forall\,  |a| {\leq} (m{+}1)\ (\varphi (a) \leftrightarrow 
\tilde{a}\models ||\varphi||^m) 
\]

\item \label{lemma:coding:item:bound}
The translation is short in the following sense.
For each $\varphi$ there exists a polynomial $p$ such that
\[
|(||\varphi||^m)|\leq p(m) .
\]

\end{enumerate}

\end{lemma}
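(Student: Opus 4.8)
The plan is to define the translation $||\cdot||^m$ by (external) recursion on the build-up of $\Pi_1^b$-formulas, producing for each $\varphi$ a definable map $m\mapsto||\varphi||^m$ together with an $S_2^1$-proof of the equivalence in item (3); items (1)–(4) are then verified simultaneously by induction on this build-up. Throughout, the intended reading is that a free number variable ranging below $2^{m+1}$ is represented by the variables $p_0,\dots,p_m$ coding its bits, while every variable bound by a (non-sharply) bounded universal quantifier is represented by a fresh block of variables among the $\vec q$. The shorthand $\tilde a\models||\varphi||^m$ then asks that, after substituting the bits of $a$ for the $p_i$, the result be a tautology in the remaining variables $\vec q$, and it is exactly this \emph{universal} reading of the $\vec q$ that will absorb the bounded universal quantifiers.

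First I would treat the base case of atomic formulas $s{=}t$ and $s{\leq}t$. The key observation is that if all variables occurring are bounded by $2^{\mathrm{poly}(m)}$, then so are the values of $s$ and $t$ (note e.g.\ $|\omega_1(x)|\leq(m{+}1)^2$ when $|x|\leq m{+}1$), so each has only $\mathrm{poly}(m)$ bits. Since the basic functions $+,\cdot,|\cdot|,\omega_1$ lie in (uniform) $NC^1$, each output bit of $s$ and of $t$ is computed from the input bits by a propositional formula of size $\mathrm{poly}(m)$, and for a term of fixed nesting depth the composition of these formulas is again of size $\mathrm{poly}(m)$; comparing the resulting bit-strings yields the translations of $s{=}t$ and $s{\leq}t$ (and of their negations, so that $\Delta_0^b$ stays closed under $\neg$). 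The provable correctness of these bit-formulas in $S_2^1$ — that they compute the graphs of the arithmetic operations — is what lets one verify the atomic case of item (3) inside $S_2^1$; this is the step I expect to require the most care, multiplication being the delicate one.

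Next I would handle the connectives and quantifiers. After bringing $\varphi$ into negation normal form (which $S_2^1$ proves equivalent and which leaves the non-sharply-bounded quantifiers universal and positive), I set $||\chi\wedge\psi||^m=||\chi||^m\wedge||\psi||^m$ and dually for $\vee$, so that negations survive only in front of atoms and are absorbed by the base case; this gives item (1) by construction. For a sharply bounded quantifier $\forall\,y{\leq}|t|$ (resp.\ $\exists\,y{\leq}|t|$) the range of $y$ is bounded by a fixed $\mathrm{poly}(m)$, so I expand it into the conjunction (resp.\ disjunction) over the admissible constants $\underline{j}$ of $(\,\underline{j}{\leq}|t|\,)\to||\psi[y/\underline{j}]||^m$, keeping the size polynomial. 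The crucial case is the bounded universal quantifier $\forall\,y{\leq}t$: here $t$ may be of size $2^{\mathrm{poly}(m)}$, so instead of expanding I introduce a fresh block $\vec q_y$ of $\mathrm{poly}(m)$ variables for the bits of $y$ and put
\[
||\forall\,y{\leq}t\ \psi||^m \ := \ (\,\vec q_y{\leq}t\,)\to||\psi||^m ,
\]
where $\vec q_y{\leq}t$ is the comparison supplied by the base case and $y$ is henceforth read off $\vec q_y$. Because $\tilde a\models||\varphi||^m$ quantifies universally over all the $\vec q$, this guarded implication says exactly ``for every $y$, if $y{\leq}t$ then $\psi(a,y)$''. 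In negation normal form a $\Pi_1^b$-formula contains no positive, non-sharply-bounded existential quantifier, so no dual existential clause is needed.

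Finally, items (3) and (4) follow by induction along these clauses. For (4) the size bounds add under $\wedge,\vee$, get multiplied by the polynomial range under a sharply bounded quantifier, and grow by the guard plus one copy of the subtranslation under a bounded $\forall$; as $\varphi$ is fixed and finite, the bound telescopes to a single polynomial $p(m)$. For (3) each clause's equivalence is proved in $S_2^1$: the atomic base uses the provable correctness of the term bit-formulas, the connective and sharply bounded cases are immediate from the induction hypothesis, and the bounded-$\forall$ case uses that ${\sf Taut}$ of the guarded implication factors, provably in $S_2^1$, into ``for all $y{\leq}t$, $\tilde a,y\models||\psi||^m$'' and hence, by the induction hypothesis, into $\forall\,y{\leq}t\,\psi(a,y)$. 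I expect the only genuinely laborious ingredient to be the $S_2^1$-provable simulation of the arithmetic operations in the atomic case; everything else is bookkeeping along the recursion.
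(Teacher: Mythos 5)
Your proposal is correct, and it coincides with the argument the paper implicitly relies on: the paper states Lemma \ref{lemma:coding} without proof, as standard coding machinery going back to the Paris--Wilkie/Cook-style translation used by Kraj\'\i\v{c}ek and Pudl\'ak, and your construction is exactly that standard translation (poly-size bit-definitions of terms in the atomic case, structural clauses with sharply bounded quantifiers expanded, and bounded universal quantifiers absorbed into fresh blocks of the universally read variables $\vec{q}$ under the ${\sf Taut}$ reading, with adequacy verified in $S_2^1$ by external induction on $\varphi$). You also correctly identify the genuinely laborious ingredient, namely the $S_2^1$-provable correctness of the bitwise term formulas, and your uniform, definable-in-$m$ formulation of item (3) is what the paper's applications (short proofs polynomial in $m$) in fact require.
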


Note that $||\varphi||^m$ and $||\varphi||_T$ denote two completely different things. We are confident that the reader can keep them apart.
Now that all coding machinery has been discussed, we are ready to present a proof of Theorem \ref{theorem:facopiffopps}.

\begin{proofof}{Theorem \ref{theorem:facopiffopps}}
In this proof, we shall denote by $T\vdash_{\star} \varphi$ the statement that $\varphi$ is provable in $T$ by a proof whose length is bounded by some polynomial on the (length of) the parameters of $\varphi$. Sometimes we shall have to specify the parameters to keep the intended reading clear.
\medskip

``$\Rightarrow$'' We repeat the proof from \cite{KrajicekPudlak:propproofs}, and \cite{Krajicek:book} (Theorem 14.1.4). Let $S$ be a facop. We define $P$ and show that $P$ is an opps.
\[
\begin{array}{ll}
P(\pi, \tau)\ := & {\sf Proof}_S(\pi, {\sf Taut}(\tau)) \ \mbox{ or }\\
\ & ( \tau =1 \mbox{ and $\pi$ is not a proof in $S$ of ${\sf Taut}(\tau')$ for any $\tau'$)}
\end{array}
\]
To see that $P$ is an opps we fix some arbitrary $Q$ and consider some
$\pi$ and $\tau$ such that $Q(\pi,\tau)$. By Theorem \ref{theorem:NPenumerable} we get that
\begin{equation}\label{equation:bewijsbaarinQ}
S^1_2 \vdash_{\star} Q(\pi,\tau).
\end{equation}
We now define $T_Q := S^1_2+{\sf RFN}(Q)$.
Clearly, by \eqref{equation:bewijsbaarinQ} and by ${\sf RFN}(Q)$ we get
$T_Q\vdash_{\star} {\sf Taut}(\tau)$. Here, the $\star$ is still dependent on $|\pi|$. Once more, by Theorem \ref{theorem:NPenumerable} we get for some polynomial $p$ that
\begin{equation}\label{equation:TQbewijst}
S\vdash_{\star} {\sf Pr}_{T_Q}(p(|\pi|), \ulcorner {\sf Taut}(\tau)\urcorner) .
\end{equation}
By Theorem \ref{theorem:sigmacomplete}, for some polynomial $p'$ we have that
\begin{equation}\label{equation:Tautreflectie}
S\vdash \neg {\sf Taut}(\tau) \to {\sf Pr}_{T_Q}(p'(|\tau|), \ulcorner\neg {\sf Taut}(\tau)\urcorner) .
\end{equation}
Combining \eqref{equation:TQbewijst} and \eqref{equation:Tautreflectie} we get for some polynomial $q$ that 
\[
S\vdash_{\star} \neg {\sf Taut}(\tau) \to {\sf Pr}_{T_Q}(q(|\pi|), \ulcorner 0=1\urcorner)
\]
or equivalently
\[
S\vdash_{\star} {\sf Con}_{T_Q}(q(|\pi|)) \to {\sf Taut}(\tau).
\]
As $S$ is a facop, we get $S\vdash_{\star}{\sf Con}_{T_Q}(q(|\pi|))$, whence
$S\vdash_{\star}{\sf Taut}(\tau)$ and thus, $P\vdash \tau$ by a proof whose length is polynomial in $|\pi|$.
\medskip

"${\bf{\Leftarrow}}$" Let $P$ be an opps. We define
\[
S\ := \  S^1_2 + {\sf RFN}(P)
\]
and shall prove that $S$ is a facop. So, let $T$ be some $\Delta_1^b(S^1_2)$ theory. We should see that ${\sf Con}_T(\underline{x})$ has short proofs in $S$. For this purpose we define $Q$ as follows.
\[
Q\ := \ P + \{ ||{\sf Con}_T(|x|)||^m\mid m < \omega \}
\]
Note that, due to the logarithm, ${\sf Con}_T(|x|)$ is indeed a $\Pi^b_1$ formula. As $P$ is an opps, we get by 
Item \ref{lemma:coding:item:bound} of Lemma \ref{lemma:coding} that
\[
P\vdash_{\star}||{\sf Con}_T(|x|)||^m .
\]
Here, the $\star$ refers to polynomial in $m$. By Theorem \ref{theorem:NPenumerable} we get that 
\[
S^1_2 \vdash_{\star} \exists y\ P(y,||{\sf Con}_T(|x|)||^m)
\]
and consequently,
\[
S\vdash_{\star} {\sf Taut}(||{\sf Con}_T(|x|)||^m).
\]
In particular
\[
S\vdash_{\star} \forall \, |a|{\leq}(m{+}1) \ \tilde{a}\models||{\sf Con}_T(|x|)||^m.
\]
For $a=2^m$ we get via Item \ref{lemma:coding:item:adequate} from Lemma
\ref{lemma:coding} that 
\[
S\vdash_{\star}{\sf Con}_T(\underline{m}).
\]
In other words, for some polynomial $p$,
\[
||{\sf Con}_T(\underline{m})||_S \leq p(m)
\]
and, indeed, $S$ is a facop.
\end{proofof}

\section{Unlikely fast consistency provers and unlikely propositional proof systems}\label{section:UfacopsEnUppses}

In the previous section we have studied facops. As a direct consequence of 
Theorem \ref{theorem:facopiffopps} we get that
\[
{\sf NP}={\sf coNP} \Rightarrow \mbox{there exists a facop}.
\]
As mentioned before, the converse implication is an open question. In this section we shall define a unlikely fast consistency prover, a ufacop for short, which is a particular sort of facops. We shall then prove that the existence of a ufacop is equivalent to 
${\sf NP}={\sf coNP}$.

In order to prove this, we shall have to employ a slightly different definition of a pps. However, in the light of Lemma \ref{lemma:tautsmallerthanproof} this alteration is not really essential.

\begin{definition}
A pps $P$ is a poly-time mapping from the set of all strings onto the set of all tautologies such that $P(\pi,\tau)$ implies $|\tau|\leq |\pi|$. 
\end{definition}

To the best of our knowledge, there is no theorem concerning pps's that does not remain valid under this new definition.

\begin{definition}
An \emph{unlikely propositional proof system} ---an upps for short--- is a pps $P$ such that for some polynomial $p$ we have that
\[
\forall^{\sf pps} Q\ P\geq_{p(x)} Q .
\]
\end{definition}

\begin{theorem}\label{theorem:superupps}
$\exists \mbox{\ upps } \Longleftrightarrow \  \exists \mbox{\ super pps\ }
\Longleftrightarrow \ {\sf coNP=NP}$
\end{theorem}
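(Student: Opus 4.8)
The plan is to derive the theorem from Theorem \ref{theorem:CookReckhow} together with a direct equivalence between the existence of a super pps and the existence of a upps. Since Theorem \ref{theorem:CookReckhow} already gives that ${\sf NP}={\sf coNP}$ holds iff there is a super pps, it suffices to prove that super pps's and upps's coexist.

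For the easy direction I would show that any super pps $P$ is automatically a upps. Let $p$ witness that $P$ is super, taken monotone increasing. Given an arbitrary pps $Q$ and any $\pi,\tau$ with $Q(\pi,\tau)$, the range condition on $Q$ forces $\tau$ to be a tautology, while the strengthened pps convention forces $|\tau|\leq|\pi|$. Since $P$ is super there is a $P$-proof $\pi'$ of $\tau$ with $|\pi'|\leq p(|\tau|)\leq p(|\pi|)$. As the same polynomial $p$ is used for every $Q$, this shows $P\geq_{p(x)}Q$ uniformly, i.e.\ $P$ is a upps. This step is exactly where the new definition of a pps (requiring $|\tau|\leq|\pi|$) is used, since otherwise one could not bound $|\tau|$ by $|\pi|$.

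For the converse I would start from a upps $P$ with a single witnessing polynomial $p$ and construct, for each tautology $\tau$, a tailor-made pps $Q_\tau$ in which $\tau$ has an unusually short proof. Fixing once and for all a standard pps $P_0$ (which by Lemma \ref{lemma:tautsmallerthanproof} may be taken to satisfy $|\sigma|\leq|\pi|$ whenever $P_0(\pi,\sigma)$), I would define $Q_\tau$ to output $\tau$ on the input string $\tau$ itself, and to behave like $P_0$ on every other input. This $Q_\tau$ is poly-time, since it hardwires the constant $\tau$ and performs one string comparison before possibly deferring to $P_0$; it is onto the tautologies because $P_0$ is; and it meets the new pps convention, since the proof $\tau$ of $\tau$ has length exactly $|\tau|$. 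Because $P$ is a upps, the fixed polynomial $p$ yields $P\geq_{p(x)}Q_\tau$, and applying this to the proof $\pi=\tau$ of $\tau$ produces a $P$-proof $\pi'$ of $\tau$ with $|\pi'|\leq p(|\tau|)$. As $\tau$ was arbitrary, $p$ witnesses that $P$ is super.

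The main obstacle, and the only genuinely delicate point, is the construction of $Q_\tau$ in the converse. The tempting shortcut is to make $\tau$ provable in $Q_\tau$ by a constant-length proof, which would, via the uniform $p$, force every tautology to have a $P$-proof of constant length, an absurdity. The convention $|\tau|\leq|\pi|$ blocks exactly this degenerate route: it guarantees that the shortest proof of $\tau$ in any pps already has length at least $|\tau|$, so the best one can engineer is a proof of length $|\tau|$, which is precisely what turns the uniform simulation into the super bound $p(|\tau|)$. One should also note that the dependence of $Q_\tau$ on $\tau$ causes no difficulty, as the definition of a upps quantifies over all pps's with a single polynomial and hence applies simultaneously to the whole family $\{Q_\tau\}_\tau$.
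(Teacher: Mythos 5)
Your proof is correct and takes essentially the same route as the paper: both reduce the complexity equivalence to Theorem \ref{theorem:CookReckhow} and then show a pps is an upps iff it is super, with your tailor-made $Q_\tau$ being exactly the paper's $P+\tau$ (identity proof on the input $\tau$, a fixed system elsewhere) and your easy direction using the convention $|\tau|\leq|\pi|$ together with monotonicity of $p$ just as the paper does. The one point you gloss over---redefining the value at input $\tau$ may remove $P_0(\tau)$ from the range, so surjectivity requires rerouting that output to some other input---is precisely the coding subtlety the paper itself acknowledges and dismisses in a footnote.
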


\begin{proof}
By a basic Theorem \ref{theorem:CookReckhow} we know that 
$\exists \mbox{\ super pps\ }\Leftrightarrow \ {\sf coNP=NP}$. We relate the existence of an upps to the existence of a super pps by actually proving that 
$P$ is an upps $\Leftrightarrow$ $P$ is super.
\medskip

``$\Rightarrow$''\ Let $P$ be an upps with corresponding polynomial $p$. It follows that $P$ is super. For, let $\tau$ be some tautology. 
Then\footnote{There is a subtle technicality here as to the representation of 
$P+\tau$. It is tempting to define the mapping $P+\tau$ (remember, a proof system is a mapping) to be the identity on $\tau$. By definition $P$ was defined on $\tau$ too. The value of $P(\tau)$ should now be given on some other input, etc. We shall not go into the details of this coding here and assume some canonical representation.} 
\[
P\geq_{p(x)}P + \tau , \ \ \mbox{ whence } \ \ P\vdash_{p(|\tau|)} \tau.
\]
\medskip

``$\Leftarrow$''\ Let $P$ be super with corresponding polynomial $p$. Then $P$ is also an upps. For, let $Q$ be an arbitrary pps. If $Q(\pi,\tau)$, then, by our assumption on pps's, we see that  
$|\tau| \leq |\pi |$. As $P$ is super, we can find $\pi'$ with $P(\pi', \tau)$ and $|\pi'|\leq p(|\tau|)$. By monotonicity of $p$, clearly $|\pi'| \leq p(|\pi|)$ and we see that $P$ indeed is an ufacop.
\end{proof}

It is only in this proof (proof of Theorem \ref{theorem:superupps}) that we need the assumption on an upps $P$ that $P(\pi,\tau)$ implies $|\pi|\leq|\tau|$.

We shall relate uppses to ufacops --\emph{unlikely fast consistency provers}-- which are an adaptation of facop's. Basically, the idea is that a ufacop is a uniform version of a facop. That is, we swap quantifiers. For a facop $S$ there is, for any poly-time theory $T$, a polynomial $p$ such that $||{\sf Con}_T(\underline{n})||_S\leq p(n)$. 

If we would simply define a ufacop $S$ to be such that there is a polynomial $p$ such that for any poly-time theory $T$ we have  
$||{\sf Con}_T(\underline{n})||_S\leq p(n)$, it would be easy to see that there are no ufacops. This is because the axiomatization of $T$ could be very very long, so that the length of ${\sf Con}_T(\underline{n})$ cannot be bounded.
So, we define a measure of the complexity of $T$ that will go into the definition of a ufacop.

\begin{definition}
If $R$ is a relation that is decidable in time $\mathcal{O}(|x|^l)$ we shall call $l$ the \emph{decision exponent} of $R$ and write 
$l={\sf DecExp}(R)$. 

If $T$ is a theory with a poly-time decidable set of axioms, we denote by ${\sf DecExp}(T)$ the decision exponent of the set of axioms of $T$.
\end{definition}
\begin{definition}
An \emph{unlikely fast consistency prover}, a \emph{ufacop} for short, is a 
$\Delta_1^b(S_2^1)$ axiomatizable theory $S$ such that there is a polynomial $p$ such that
\[
\forall^{\Delta_1^b(S_2^1)}T \ \forall x \ \ ||{\sf Con}_T(\underline{x})||_S\leq \, p(x^l).
\]
Here, $l:= {\sf DecExp}(T)$.
\end{definition}
Before we shall relate uppses to ufacops we first need some additional observations on coding techniques.
\begin{lemma}\label{lemma:polytranslation}
If $R$ is a poly-time relation with ${\sf DecExp}(R) =l$, then there is a series of propositional formulas $\rho_n$ such that for some polynomial independent of $R$ we have
\begin{enumerate}
\item
$a\in R \ \Leftrightarrow \ \tilde{a}\models \rho_n$ \ \ for $|a|{\leq}(n{+}1)$,

\item
$|\rho_n| = \mathcal{O}(p(n^l))$.

\end{enumerate}
 
 \end{lemma}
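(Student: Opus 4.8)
The plan is to use the standard propositional translation of polynomial-time computations --- the Cook--Levin tableau, or equivalently a Tseitin-style encoding of the Boolean circuit deciding $R$ --- and to match the \emph{tautology} reading of $\models$ from Lemma~\ref{lemma:coding} by exploiting determinism. Fix a deterministic Turing machine $M$ deciding $R$ in time at most $c\cdot|x|^l$; this exists since ${\sf DecExp}(R)=l$. For any input $a$ with $|a|\leq n{+}1$ the run of $M$ takes at most $t_n:=c\cdot(n{+}1)^l=\oo{n^l}$ steps and visits at most that many tape cells. I would use input variables $p_0,\ldots,p_n$ carrying the bits of $a$ (padded with zeros when $|a|<n{+}1$) and auxiliary variables $\vec q$ describing a $t_n\times t_n$ tableau, recording for each time step and cell the symbol, whether the head scans it, and the current state. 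Let ${\sf Comp}_M(\vec p,\vec q)$ be the propositional formula asserting that $\vec q$ is the correct step-by-step run of $M$ on the input coded by $\vec p$ (initial row determined by $\vec p$, each row obtained from the previous one by $M$'s transition table, halted configurations copied unchanged), let ${\sf Acc}(\vec q)$ say that the final row is accepting, and set
\[
\rho_n \ := \ {\sf Comp}_M(\vec p,\vec q)\to{\sf Acc}(\vec q).
\]

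For Item~1 I would argue by determinism. Fixing $a$ with $|a|\leq n{+}1$ and substituting $\tilde a$ for $\vec p$, there is exactly one assignment $\vec q^{\,*}$ satisfying ${\sf Comp}_M(\tilde a,\vec q)$, namely the genuine run of $M$ on $a$. If $a\in R$ then $\vec q^{\,*}$ is accepting, so every assignment satisfies $\rho_n$ (assignments with a false antecedent trivially, and $\vec q^{\,*}$ because its consequent holds); thus $\rho_n(\vec p/\tilde a)$ is a tautology and $\tilde a\models\rho_n$. If $a\notin R$ then $\vec q^{\,*}$ is rejecting and already falsifies $\rho_n$, so $\rho_n(\vec p/\tilde a)$ is not a tautology and $\tilde a\not\models\rho_n$. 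This is precisely the equivalence $a\in R\Leftrightarrow\tilde a\models\rho_n$ for $|a|\leq n{+}1$.

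For Item~2 I would simply count. The tableau has $\oo{t_n^2}=\oo{n^{2l}}$ cells; the number of $\vec q$-variables per cell, and the size of each local constraint (initial row, a single transition, the halting copy, acceptance), are bounded by a constant depending only on the alphabet and state set of $M$. Hence $|\rho_n|=\oo{n^{2l}}$, so the bound holds with the fixed polynomial $p(y)=y^2$, which is independent of $R$: only the hidden $\oo{\cdot}$-constant (through the size of $M$) and the exponent $l$ depend on $R$, exactly as the statement permits.

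The construction is routine, so the hard part is two bookkeeping issues. The first is matching the universal (``tautology'') reading of $\models$ rather than an existential one: this is exactly where determinism is indispensable, since for deterministic $M$ the certificate $\vec q$ is forced by the input, making ``all correct $\vec q$ accept'' equivalent to ``the unique run accepts''; for a general ${\sf NP}$ relation the very same formula would fail Item~1. The second is confirming that the polynomial in Item~2 is genuinely independent of $R$ --- that the description of $M$ enters only through the $\oo{\cdot}$-constant and the time exponent $l$, while the degree of $p$ is absolute.
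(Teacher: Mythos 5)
Your proposal is correct and follows essentially the same route as the paper: the paper invokes the standard circuits $C_n$ of size linear in ${\sf time}\times{\sf space}$, giving $\mathcal{O}(n^{2l})=\mathcal{O}((n^l)^2)$, and then the standard translation of circuits into propositional formulas, which is exactly your Cook--Levin tableau encoding with auxiliary variables $\vec q$ and the fixed polynomial $p(y)=y^2$. Your explicit verification that determinism makes the implication-form formula match the universal (tautology) reading of $\tilde{a}\models\rho_n$ is a detail the paper's sketch leaves implicit, but it is the same construction, not a different argument.
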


\begin{proof}
It is well known that a relation $R$ which is decidable in time 
$\mathcal{O}(n^l)$, has circuits $C_n$ of size linear in ${\sf time}\times{\sf space}$. Clearly, the space is bounded by the time, yielding $\mathcal{O}(n^{2l})=\mathcal{O}((n^l)^2)$. The circuits can be translated in the standard way to propositional formulas which are not much larger than the circuits. All this scaling by coding techniques can be collected in a polynomial $p$.
\end{proof}

From this lemma it follows that for any $\Delta^b_1(S^1_2)$ relation $R$, there is an $l'$ such that $\forall n\ |\rho_n|\leq p(n^{l'})$ for the $\rho_n$ and $p$ as in the lemma above. For the sake of readability we shall assume that $l=l'$. Alternatively one could define ${\sf DecExp}(R)$ to be this very $l'$.

\begin{lemma}\label{lemma:contranslation}
Let $T$ be a theory with a poly-time set of axioms with ${\sf DecExp}(T)=l$. There is a translation
${\langle}|\cdot|{\rangle}^m$ of specific $\Pi_1^b$ formulas into series of propositional formulas such that there is a fixed (independent of $T$) polynomial $q$ such that 
\[
|({\langle}|{\sf Con}_T(|x|)|{\rangle}^m)|\leq q(m^l).
\]
\end{lemma}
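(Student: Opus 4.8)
The plan is to build the translation $\langle|\cdot|\rangle^m$ for the specific formula ${\sf Con}_T(|x|)$ by composing the generic circuit-to-propositional translation of Lemma \ref{lemma:polytranslation} with the structure-preserving arithmetical translation $||\cdot||^m$ of Lemma \ref{lemma:coding}, and then to track the size bound carefully so that the only $T$-dependent quantity entering the final polynomial is the decision exponent $l = {\sf DecExp}(T)$. The whole point of the lemma is uniformity: the polynomial $q$ must not depend on $T$, only $l$ is allowed to appear, so I must be careful that all the coding overhead is absorbed into a single fixed polynomial.

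First I would unwind what ${\sf Con}_T(|x|)$ says. Recall ${\sf Con}_T(|x|) = \neg {\sf Pr}_T(|x|, \code{0=1})$, and ${\sf Pr}_T(x,y) = \exists \pi\,(|\pi|{\leq}x \wedge {\sf Proof}_T(\pi,y))$. Because of the logarithm built into ${\sf Pr}_T$, the bound $|x|$ makes the existential quantifier over $\pi$ a sharply bounded one: we are ranging over proofs $\pi$ with $|\pi| \leq |x|$, i.e.\ $\pi$ of bit-length at most $|x|$. Thus ${\sf Con}_T(|x|)$ is genuinely $\Pi_1^b$ (indeed the author flags this explicitly in the proof of Theorem \ref{theorem:facopiffopps}), and after the sharply bounded quantifier the matrix is the predicate $\neg{\sf Proof}_T(\pi, \code{0=1})$. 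The crucial subformula is ${\sf Proof}_T$, and this is where $T$ enters: deciding whether $\pi$ is a correct $T$-proof requires, among the local correctness checks, verifying that each line claimed to be an axiom really is an axiom of $T$, and \emph{this} is the step whose running time is $\mathcal{O}(|\pi|^l)$ with $l = {\sf DecExp}(T)$.

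Next I would apply Lemma \ref{lemma:polytranslation} to the axiom-recognition relation ${\sf Axiom}_T$ of $T$, whose decision exponent is $l$. The lemma hands me propositional formulas $\rho_n$ with $|\rho_n| = \mathcal{O}(p(n^l))$ for a polynomial $p$ \emph{independent of} $R$, which here means independent of $T$. I would then assemble $\langle|{\sf Con}_T(|x|)|\rangle^m$ by taking the structure-preserving translation $||\cdot||^m$ of Lemma \ref{lemma:coding} on the outer $\Pi_1^b$ skeleton $\neg{\sf Pr}_T$ (negation, the sharply bounded conjunction over candidate proofs $\pi$ with $|\pi|\leq m$, and the internal ${\sf Proof}_T$ bookkeeping), and plugging in the $T$-dependent circuit translation $\rho_{(\cdot)}$ only at the axiom-check leaves. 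The size estimate is then additive/multiplicative in the obvious way: the generic syntactic scaffolding contributes $\mathcal{O}(p'(m))$ for a fixed polynomial $p'$, and there are at most $\mathcal{O}(m)$ occurrences of the axiom-check block (one per line of a proof of length $\leq m$), each of size $\mathcal{O}(p(m^l))$ by Lemma \ref{lemma:polytranslation}. Collecting these, $|\langle|{\sf Con}_T(|x|)|\rangle^m| \leq q(m^l)$ for a single fixed polynomial $q$ absorbing all the coding constants and the bounded number of repetitions.

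The main obstacle I expect is bookkeeping rather than conceptual: making precise that the substitution of the $T$-dependent blocks $\rho_{(\cdot)}$ into the fixed generic scaffold genuinely keeps the polynomial $q$ independent of $T$. The danger is that the ``structure'' of ${\sf Con}_T(|x|)$ as a $\Pi_1^b$ formula secretly depends on $T$ (e.g.\ through the term bounding the proof search, or through the length of a fixed description of $T$'s axiom set), which would sneak a $T$-dependence into $q$ beyond the allowed $l$. The key observation that resolves this is that the \emph{shape} of ${\sf Con}_T(|x|)$ is uniform in $T$ once one uses a universal axiom-recognition predicate parametrized by a program for $T$: only the decision time of that predicate varies, and that variation is exactly captured by $l$ via Lemma \ref{lemma:polytranslation}. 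I would therefore emphasize that ${\sf Proof}_T$ is expressed through a fixed arithmetization in which $T$'s axioms are accessed only through the single relation ${\sf Axiom}_T$, so that the translation factors cleanly through Lemma \ref{lemma:polytranslation} and the fixed polynomial $q$ depends solely on the universal scaffold, with $l$ entering only through the degree $m^l$ coming from the axiom check.
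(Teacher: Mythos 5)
Your proposal is correct and follows essentially the same route as the paper's (much terser) proof: unwind ${\sf Con}_T(|x|)$ into a sharply bounded statement about candidate proofs, observe that the only $T$-dependent ingredient is the axiom-recognition predicate ${\sf Axioms}_T$, translate that via Lemma \ref{lemma:polytranslation}, and translate the remaining fixed scaffold structurally via Lemma \ref{lemma:coding}. Your additional bookkeeping on why the final polynomial $q$ stays independent of $T$ (with $l$ entering only through $m^l$) is a faithful elaboration of what the paper leaves implicit.
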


\begin{proof}
The formula ${\sf Con}_T(|x|)$ says 
$\forall \, |y|{<}|x|\ \neg {\sf Proof}_T(y,\ulcorner 0=1 \urcorner)$.
Here, \\
${\sf Proof}_T(y,\ulcorner 0=1 \urcorner)$ is as always saying that 
$y$ is a sequence (a proof) where some entries are axioms of $T$. We translate the $\Delta^b_1(S^1_2)$ formula ${\sf Axioms}_T(x)$ using Lemma 
\ref{lemma:polytranslation} and the rest in the structural way as mentioned in Lemma \ref{lemma:coding}. 
\end{proof}
Note that this translation ${\langle}|\cdot|{\rangle}^m$ still has all the structural properties as mentioned in Lemma \ref{lemma:coding}. We shall in the sequel refrain from distinguishing ${\langle}|\cdot|{\rangle}^m$ and $||\cdot||^m$.

\begin{theorem}\label{theorem:ufacop}
The following are equivalent.
\begin{enumerate}
\item \label{theorem:ufacop:item:ufacop}
$\exists$ ufacop

\item \label{theorem:ufacop:item:upps}
$\exists$ upps

\item \label{theorem:ufacop:item:super}
$\exists$ super pps

\item 
$\sf coNP=NP$

\end{enumerate}

\end{theorem}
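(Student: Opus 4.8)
The plan is to lean on Theorem~\ref{theorem:superupps}, which already establishes the equivalence of items~\ref{theorem:ufacop:item:upps}, \ref{theorem:ufacop:item:super} and $\sf coNP=NP$. It therefore suffices to weave item~\ref{theorem:ufacop:item:ufacop} into this cycle, which I would do by proving item~\ref{theorem:ufacop:item:upps}~$\Rightarrow$~item~\ref{theorem:ufacop:item:ufacop} and item~\ref{theorem:ufacop:item:ufacop}~$\Rightarrow$~item~\ref{theorem:ufacop:item:super}. Both implications run parallel to the two halves of the proof of Theorem~\ref{theorem:facopiffopps}; the only genuinely new ingredient is a careful audit of the binding polynomials, showing that under the uniformity hypotheses every bound can be taken to depend on the auxiliary theory \emph{only through its decision exponent}, so that a single polynomial survives.

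For item~\ref{theorem:ufacop:item:upps}~$\Rightarrow$~item~\ref{theorem:ufacop:item:ufacop}, let $P$ be an upps with uniform simulation polynomial $p$ and set $S := S^1_2 + {\sf RFN}(P)$, exactly as in the ``$\Leftarrow$'' half of Theorem~\ref{theorem:facopiffopps}. Given any $\Delta^b_1(S^1_2)$ theory $T$ with ${\sf DecExp}(T)=l$, I would form $Q := P + \{ {\langle}|{\sf Con}_T(|x|)|{\rangle}^m \mid m<\omega\}$ and note, via Lemma~\ref{lemma:contranslation}, that the new axioms have length at most $q(m^l)$ for a \emph{fixed} polynomial $q$ independent of $T$; hence $Q$ proves each of them by a trivial proof of length $O(q(m^l))$. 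Because $P$ is an upps, the \emph{same} polynomial $p$ yields a $P$-proof of the same formula of length at most $p(O(q(m^l)))$. Feeding this into the rest of the facop argument --- polynomial numerability (Theorem~\ref{theorem:NPenumerable}) to obtain $S^1_2\vdash_\star \exists y\, P(y,{\langle}|{\sf Con}_T(|x|)|{\rangle}^m)$, then the axiom ${\sf RFN}(P)$, then adequacy (Item~\ref{lemma:coding:item:adequate} of Lemma~\ref{lemma:coding}) at $a=\underline{m}$ --- composes this bound with further fixed polynomials, since all formulas in play have complexity bounded by $l$. The outcome is $||{\sf Con}_T(\underline{m})||_S \leq p^*(m^l)$ for one fixed $p^*$, so $S$ is a ufacop.

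For item~\ref{theorem:ufacop:item:ufacop}~$\Rightarrow$~item~\ref{theorem:ufacop:item:super}, let $S$ be a ufacop with uniform polynomial $p$ and define $P$ by ${\sf Proof}_S(\pi,{\sf Taut}(\tau))$ precisely as in the ``$\Rightarrow$'' half of Theorem~\ref{theorem:facopiffopps}. To witness superness I would fix an arbitrary tautology $\tau$ and run that argument on the pps $Q_\tau := P + \tau$, i.e.\ $P$ augmented to output $\tau$ on one fixed short input; then $Q_\tau(\pi_\tau,\tau)$ holds for a trivial proof with $|\pi_\tau| = O(|\tau|)$, and since $\tau$ is a genuine tautology $Q_\tau$ is an honest pps, so ${\sf RFN}(Q_\tau)$ is true and $T_{Q_\tau}:=S^1_2+{\sf RFN}(Q_\tau)$ is consistent. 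Following \eqref{equation:bewijsbaarinQ}--\eqref{equation:Tautreflectie} yields $S\vdash_\star {\sf Con}_{T_{Q_\tau}}(c(|\tau|))\to {\sf Taut}(\tau)$, and the ufacop property then supplies $S\vdash_\star {\sf Con}_{T_{Q_\tau}}(c(|\tau|))$, hence a short $S$-proof of ${\sf Taut}(\tau)$ and thus a short $P$-proof of $\tau$.

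The step I expect to be the main obstacle is making this last argument \emph{uniform} in $\tau$: superness demands one polynomial $p^*$ with $||{\sf Taut}(\tau)||_S\leq p^*(|\tau|)$ for \emph{all} tautologies at once. The key observation is that passing from $P$ to $Q_\tau$, and from $S^1_2$ to $T_{Q_\tau}$, only adds a single hard-coded output and a single hard-coded axiom, whose recognition costs merely linear time; hence ${\sf DecExp}(Q_\tau)$ and ${\sf DecExp}(T_{Q_\tau})$ are bounded by a constant independent of $\tau$. Consequently the ufacop bound $p(n^l)$ is invoked with a \emph{fixed} exponent $l$, and the polynomials coming from Theorems~\ref{theorem:NPenumerable} and~\ref{theorem:sigmacomplete} depend on $\tau$ only through this bounded exponent and through the input length $|\tau|$. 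Composing these finitely many fixed polynomials gives the required $p^*$, so $P$ is super. Chaining with Theorem~\ref{theorem:superupps} then closes the cycle item~\ref{theorem:ufacop:item:ufacop}~$\Rightarrow$~item~\ref{theorem:ufacop:item:super}~$\Leftrightarrow$~item~\ref{theorem:ufacop:item:upps}~$\Rightarrow$~item~\ref{theorem:ufacop:item:ufacop}, with item~\ref{theorem:ufacop:item:super}~$\Leftrightarrow \sf coNP=NP$, proving all four items equivalent.
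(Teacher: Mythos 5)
Your proposal is correct and uses the same decomposition as the paper (handle only item \ref{theorem:ufacop:item:ufacop}, via \ref{theorem:ufacop:item:upps} $\Rightarrow$ \ref{theorem:ufacop:item:ufacop} and \ref{theorem:ufacop:item:ufacop} $\Rightarrow$ \ref{theorem:ufacop:item:super}, then invoke Theorem \ref{theorem:superupps}), and your \ref{theorem:ufacop:item:upps} $\Rightarrow$ \ref{theorem:ufacop:item:ufacop} direction is essentially identical to the paper's, down to the choice $S = S^1_2 + {\sf RFN}(P)$, the system $Q = P + \{\,\|{\sf Con}_T(|x|)\|^m \mid m<\omega\,\}$, and the use of Lemma \ref{lemma:contranslation} to get the $T$-independent bound $q(m^l)$. (One point the paper makes explicit that you gloss over: the length of the \emph{formula} ${\sf Con}_T(\underline{m})$ itself must be assumed non-pathologically coded, bounded by $t(|m|^l)$; your phrase ``all formulas in play have complexity bounded by $l$'' silently assumes this.) Where you genuinely diverge is in \ref{theorem:ufacop:item:ufacop} $\Rightarrow$ \ref{theorem:ufacop:item:super}: you re-run the full facop-to-opps machinery of Theorem \ref{theorem:facopiffopps} on the $\tau$-indexed family $Q_\tau = P + \tau$ with reflection theories $T_{Q_\tau} = S^1_2 + {\sf RFN}(Q_\tau)$, whereas the paper deliberately simplifies: it never introduces an auxiliary pps at all, instead applying Theorem \ref{theorem:sigmacomplete} to the fixed formula $\neg{\sf Taut}(x)$ over $S^1_2$ to derive, \emph{once}, the single $S$-theorem $\forall x\,({\sf Con}_{S^1_2+{\sf Taut}(\dot{x})}(q'(|x|)) \to {\sf Taut}(x))$, and then invokes the ufacop bound on the theories $S^1_2 + {\sf Taut}(\tau)$. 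The payoff of the paper's route is that uniformity in $\tau$ is automatic --- $\tau$ is a bound variable of one theorem, so there is nothing to audit except ${\sf DecExp}(S^1_2+{\sf Taut}(\tau))$ being constant (the same linear-time axiom-recognition observation you make). Your route works, but it demands slightly more than you acknowledge: Theorems \ref{theorem:NPenumerable} and \ref{theorem:sigmacomplete} as stated supply a polynomial \emph{per relation and per theory}, and the relations $Q_\tau$ and theories $T_{Q_\tau}$ vary with $\tau$, with description size growing like $|\tau|$; the bounded decision exponent alone does not literally give one polynomial. This is repairable --- apply Theorem \ref{theorem:NPenumerable} to the single ternary poly-time relation $R(\tau,\pi,\sigma) :\Leftrightarrow Q_\tau(\pi,\sigma)$ with $\tau$ as a parameter, and obtain the $\Sigma^b_1$-completeness step for fixed $T=S^1_2$ before weakening to $T_{Q_\tau}$ --- but it is exactly the bookkeeping the paper's simplification was designed to avoid. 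In compensation, your argument proves slightly more than superness, namely a uniform simulation of the whole family $P+\tau$, and it makes transparent why the facop and ufacop proofs are ``the same proof with the quantifiers swapped.''
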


\begin{proof}
In the light of Theorem \ref{theorem:superupps}, we only need to concentrate on \ref{theorem:ufacop:item:ufacop}.
First we show that \ref{theorem:ufacop:item:ufacop} $\Rightarrow$ 
\ref{theorem:ufacop:item:super} and then we shall show that 
\ref{theorem:ufacop:item:upps} $\Rightarrow$ \ref{theorem:ufacop:item:ufacop}.
\medskip

``\ref{theorem:ufacop:item:ufacop} $\Rightarrow$ 
\ref{theorem:ufacop:item:super}'' \ 
Suppose $S$ is a ufacop. We define $P_S$ as follows:
\[
\begin{array}{ll}
P_S(\pi, \tau)\ := & {\sf Proof}_S(\pi, {\sf Taut}(\tau)) \ \mbox{ or }\\
\ & ( \tau =1 \mbox{ and $\pi$ is not a proof in $S$ of ${\sf Taut}(\tau')$ for any $\tau'$)}
\end{array}
\]
We shall show that $P_S$ is super. Our proof is a simplification of the proof of the analog of this implication in Theorem \ref{theorem:facopiffopps}. Moreover, we keep track of the explicit polynomials here.

Via Theorem \ref{theorem:sigmacomplete} we get a polynomial $q(x)$ such that 
\[
\begin{array}{ll}
S\vdash \forall x\ (\neg {\sf Taut}(x) \to 
{\sf Pr}_{S_2^1}(q(|x|),\ulcorner \neg {\sf Taut}(\dot{x})\urcorner))
& \Leftrightarrow \\
S\vdash \forall x\ (\neg {\sf Taut}(x) \to
\exists \pi\ ( |\pi| {<} q(|x|) \wedge {\sf Proof}_{S_2^1}(\pi,\ulcorner \neg {\sf Taut}(\dot{x})\urcorner))) & 
\Rightarrow \\
S\vdash \forall x\ (\neg {\sf Taut}(x) \to
\exists \pi\ ( |\pi| {<} q'(|x|) \wedge {\sf Proof}_{S_2^1 + {\sf Taut}(\dot{x})}(\pi,\ulcorner 0=1
\urcorner))) & \ \\
\mbox{ for some polynomial $q'$ not so different from $q$ } & \Rightarrow \\
S\vdash \forall x\ (\forall \pi\ ( |\pi| {<} q'(|x|) \to \neg {\sf Proof}_{S_2^1 + {\sf Taut}(\dot{x})}(\pi,\ulcorner 0=1
\urcorner))
\to {\sf Taut}(x) 
) &\Rightarrow \\
S\vdash \forall x\ ({\sf Con}_{S_2^1 + {\sf Taut}(\dot{x})}(q'(|x|))
\to {\sf Taut}(x) 
) & \ \ \ \ \ (\dag)
\end{array}
\]
Now, as $S$ is an ufacop, there is a polynomial $p$ such that 
\[
||{\sf Con}_{S_2^1 + {\sf Taut}(\tau)}(q'(|\tau|))||_S \ \  \leq \ \ p((q'(|\tau|))^l) \ \ \ \ \ (\dag\dag)
\]
where $l={\sf DecExp}(S_2^1 + {\sf Taut}(\tau))$. Combining $(\dag)$  and $(\dag\dag)$ we get that
\[
S\vdash_{p'(|\tau|^l)} {\sf Taut}(\tau)
\]
for some polynomial $p'$. Note that $p'$ and $q'$ are independent of $\tau$. To conclude our argument we only need to see that 
$l={\sf DecExp}(S_2^1 + {\sf Taut}(\tau))$ is independent of $\tau$. 

However, to check whether $x$ is an axiom of $S_2^1 + {\sf Taut}(\tau)$, we have to check whether $x$ is an axiom of $S_2^1$ or whether $x={\sf Taut}(\tau)$ which is linear in $|x|$ (and so are the corresponding circuits). So, indeed, $l$ is independent on $\tau$ and $P_S$ is super.
\medskip

``\ref{theorem:ufacop:item:upps} $\Rightarrow$ \ref{theorem:ufacop:item:ufacop}'' \ 
So, we now prove $\exists$ upps $\Rightarrow$ $\exists$ ufacop. Suppose that $P$ is an upps with corresponding polynomial $p$. We claim that 
\[
S:=S^1_2 + {\sf RFN}(P)
\]
is an ufacop. To see this, we consider an arbitrary $\Delta^b_1(S^1_2)$ axiomatized theory $T$ with $l={\sf DecExp}$ and estimate $||{\sf Con}_T(\underline{n})||_S$.  The theory $T$ will be related to $P$ by defining 
\[
Q:= P+ \{  ||{\sf Con}_T(|x|)||^m \mid m< \omega\} .
\]
Note that, as we have a logarithm, indeed, ${\sf Con}_T(|x|)$ is a $\Pi_1^b$ formula and by Lemma \ref{lemma:contranslation} we get that 
\[
|(||{\sf Con}_T(|x|)||^m)| \leq q(m^l)
\]
for some polynomial $q$ independent of $T$. As $P$ is an upps we get 
\[
P\vdash_{p(q(m^l))}||{\sf Con}_T(|x|)||^m .
\]
By Theorem \ref{theorem:NPenumerable} we get some polynomial $r$, independent of $T$ such that 
\[
S\vdash_{r(m^l)} \exists y\ P(y,||{\sf Con}_T(|x|)||^m).
\]
As $S$ contains ${\sf RFN}(P)$, we can perform the following reasoning inside $S$. Note that the reasoning is uniform and not depending on particular properties of $T$ other than $l$.
\[
\begin{array}{lll}
\ \ \ \ \ \ \ \ \ \ (\star)
 & \ & \ \\
\exists y\ P(y,||{\sf Con}_T(|x|)||^m) & \Rightarrow & 
\mbox{by ${\sf RFN}(P)$}\\
{\sf Taut}(||{\sf Con}_T(|x|)||^m)& \Rightarrow & 
\mbox{by definition of ${\sf Taut}$}\\
\forall \,  |a| {\leq} (m{+}1) \ \ \tilde{a} \models ||{\sf Con}_T(|x|)||^m& \Rightarrow & 
\ \\
\widetilde{2^m}\models ||{\sf Con}_T(|x|)||^m& \Rightarrow & 
\ \mbox{ by Lemma \ref{lemma:coding}, Item \ref{lemma:coding:item:adequate}}\\
{\sf Con}_T(\underline{m}) & \ & \ \\
\ \ \ \ \ \ \ \ \ \ (\star\star)
 & \ & \ \\
\end{array}
\]
As we mentioned, this reasoning is not dependent on $T$ other than via\\ 
$|(||{\sf Con}_T(|x|)||^m)|$ and $|{\sf Con}_T(\underline{m})|$. Lemma
\ref{lemma:contranslation} takes care of the first. For the second, we shall use and assumption, namely that $|{\sf Con}_T(\underline{m})|$ is not much larger than $t(|m|^l)$ for some polynomial $t$ independent of $T$. This is not a strange assumption. 

When formalizing mathematics, at some stage, one should often exclude some pathological codings. In our case, $|{\sf Con}_T(\underline{m})|$ is only dependent on $|{\sf Axioms}_T(x)|$. By coding the small circuit that decides whether a number is a code of an axiom (see Lemma 
\ref{lemma:polytranslation}) in arithmetic, we get a short way of writing 
${\sf Axioms}_T(x)$.

If we put no restrictions on the way ${\sf Axioms}_T(x)$ is represented, it might very well consist of $10^{10^{99}}$ conjunctions of the short representation. One can even think of worse pathological codings.

Under our assumption, indeed, the reasoning between $(\star)$
and $(\star\star)$ can be performed in $S$ in a uniform way, whence for some polynomial $p'$ independent of $T$ we get that
\[
S\vdash_{p'(m^l)}{\sf Con}_T(\underline{m}).
\]
In other words $||{\sf Con}_T(\underline{m})||_S \leq p'(m^l)$ and $S$ is indeed a ufacop.
\end{proof}

\begin{question}
Under the assumption that ${\sf NP \neq coNP}$, is there an oracle relativized to which there is a facop which is not an ufacop?
\end{question}
It is clear that if the answer to this question is positive, then the existence of an opps really is (conditionally) weaker than ${\sf NP}={\sf coNP}$.
Buhrman et al. gave in \cite{Buhrman2000} an oracle under which no pps and a fortiori no facop does exist.

Verbitsky gave in \cite{Verbitsky1991} an oracle such that optimal proof systems exist, however still $\sf NE \neq coNE$, whence $\sf coNP\neq NP$.

\section{Lower bounds for facops}
Of course, having an equivalence of $\sf NP{=}coNP$ to the existence of a ufacop does not directly help to attack this problem: hard problems are never solved by reformulating them. As expected, problems related to ufacops and facops are likely to be extremely difficult. 

For example, it is not even known of specific weak theories like, for example, $S^1_2$ or even Robinson's arithmetic $Q$ that they are not a facop. In this section we shall present and reprove some well known results which are the best lower bound results known when it comes to facops.
Friedman and Pudl\'ak independently have shown the following theorem.

\begin{theorem}\label{pudlakfriedman}
For every poly-time axiomatizable 
theory $T$ extending $S^1_2$, there is a number $0 < \epsilon < 1$ such that
\[
n^{\epsilon} < || \cons_T(\numeral{n}) ||_T.
\]
\end{theorem}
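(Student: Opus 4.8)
The plan is to prove the bound through a length-sensitive (``local'') form of G\"odel's second incompleteness theorem, organized around a G\"odel sentence that carries a proof-length parameter. First I would apply the fixed-point lemma, with a free variable reserved for a length bound, to obtain a formula $\rho(x)$ with
\[
S^1_2 \vdash \rho(x) \leftrightarrow \neg \Pr_T(x, \code{\rho(\dot x)}),
\]
so that $\rho(\numeral k)$ asserts ``there is no $T$-proof of $\rho(\numeral k)$ of length $\leq k$.'' Because all numerals are efficient, $\rho(\numeral k)$ is a formula of length $O(\log k)$; this is the whole reason the argument can be kept quantitatively tight.

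The first key step is a self-reference lemma: for every $k$, any $T$-proof of $\rho(\numeral k)$ has length strictly greater than $k$. Indeed, a $T$-proof of $\rho(\numeral k)$ of length $\leq k$ would witness the truth of $\Pr_T(\numeral k, \code{\rho(\numeral k)})$, which by the fixed-point equivalence gives $T \vdash \neg\rho(\numeral k)$; together with the assumed short proof of $\rho(\numeral k)$ this contradicts the consistency (soundness) of $T$. Hence $||\rho(\numeral k)||_T > k$ whenever $\rho(\numeral k)$ is $T$-provable.

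The second, and main, step is to convert short proofs of finite consistency into short proofs of $\rho(\numeral k)$. Here I would formalize inside $T$, with a proof whose length is \emph{polynomial in $|\rho(\numeral k)| = O(\log k)$}, the implication ``if $\rho(\numeral k)$ has a $T$-proof of length $\leq k$, then $0=1$ has a $T$-proof of length $\leq k^c$,'' for a fixed constant $c$ absorbing the overhead of combining the short proof of the fixed-point equivalence with provable $\Sigma^b_1$-completeness (Theorem \ref{theorem:sigmacomplete}). Contrapositively, and using the cheap implication $\cons_T(\numeral n) \to \cons_T(\numeral{k^c})$ valid when $k^c \leq n$, this yields
\[
T \vdash_{(\log n)^{O(1)}} \cons_T(\numeral n) \to \rho(\numeral k), \qquad k = \lfloor n^{1/c}\rfloor .
\]

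Finally I would assemble the pieces. Writing $h(n) = ||\cons_T(\numeral n)||_T$, modus ponens against the short implication produces a $T$-proof of $\rho(\numeral k)$ of length $h(n) + (\log n)^{O(1)}$, which by the self-reference lemma must exceed $k \geq n^{1/c} - 1$. Since for large $n$ the polylogarithmic overhead is below $\tfrac12 n^{1/c}$, this forces $h(n) > n^{\epsilon}$ for any $\epsilon < 1/c$, establishing the theorem with some $0<\epsilon<1$. I expect the main obstacle to be precisely the length bookkeeping in the second step: one must verify that the local second incompleteness theorem is provable in $T$ by a proof whose length is polynomial in the \emph{logarithmic} lengths of the formulas involved, and not merely polynomial in $n$. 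This is exactly where the efficient-numeral conventions and the provable $\Sigma^b_1$-completeness of Theorem \ref{theorem:sigmacomplete} are indispensable, and where the parametrized fixed-point construction and the length-bounded derivability conditions must all be checked to respect the claimed bounds.
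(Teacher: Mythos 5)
Your proposal is correct and takes essentially the same route as the paper's proof: the same fixed point $\rho(x)\leftrightarrow\neg\Pr_T(x,\code{\rho(\dot x)})$, the same external lower bound $||\rho(\numeral k)||_T>k$ from consistency, and the same conversion of a short proof of $\cons_T(\numeral n)$ into a short proof of $\rho(\numeral k)$ via provable $\Sigma^b_1$-completeness (Theorem \ref{theorem:sigmacomplete}), with $\epsilon$ governed by the degree of the completeness polynomial. Your explicit choice $k=\lfloor n^{1/c}\rfloor$ together with the monotonicity implication $\cons_T(\numeral n)\to\cons_T(\numeral{k^c})$ is just a slightly more explicit rendering of the paper's inversion of the function $g$, and your flagged concern about polylogarithmic bookkeeping is exactly what the paper handles by carrying out the argument in $T$ with a free variable and instantiating at the efficient numeral.
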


\begin{proof}
The proof can also be found in \cite{Pudlak98}. Our proof is a bit sketchy. More details shall be given in Lemma \ref{lemm:mainidea} where the proof is milked further.

The proof proceeds by considering a fixed point $\delta(x)$ satisfying the
following equivalence.
\[
T \vdash \delta(x) \leftrightarrow \neg \Pr_T(x,\code{\delta(\dot{x})})
\]

Now, we reason in $T$.\\
Suppose $T \bewijs{x} \delta (\numeral{x})$, then, by 
Theorem \ref{theorem:sigmacomplete}, for some polynomial $f$ we
get $T \bewijs{f(x)}\Pr_T(\numeral{x},\code{\delta(\numeral{x})})$. This yields, combining with properties of the fixed-point, that 
$T \bewijs{g(x)}0=1$, for some function $g(x)=\oo{f(x)+x+\log(x)^{\oo{1}}}$.
By contraposition we get that
\begin{equation}\label{pudlakfriedman:equation:ConNaarDelta}
\cons_T(g(x)) \to \neg \Pr_T(x,\code{\delta(\dot{x})})
\end{equation}
Here ends our reasoning inside $T$. Note that, as $\delta$ was externally given, the $f$ and $g$ in this reasoning are actually also externally given.\medskip

Now, as $T$ is consistent, we get from 
\eqref{pudlakfriedman:equation:ConNaarDelta} that 
$\neg \Pr_T(x,\code{\delta(\dot{x})})$, whence
$||\delta(\numeral{x})||_T > x$. 
It is reasonable to assume that $x=\oo{f(x)}$, whence $g(x)=\oo{f(x)}$.

Again, using the provable fixed point properties of $\delta(x)$ we obtain
from \eqref{pudlakfriedman:equation:ConNaarDelta} that 
\[
||\cons_T(g(\numeral{x}))\to \delta(\numeral{x})||_T = \log(x)^{\oo{1}}
\] 
whence
\[
||\cons_T(g(\numeral{x}))||_T \geq ||\delta(\numeral{x})||_T - \log(x)^{\oo{1}}
\geq x -\log(x)^{\oo{1}}.
\]
As $g(x)=\oo{f(x)}$ we get that (the inverses of polynomials on positive numbers exist from a certain point on) for $x$ large enough
\[
\begin{array}{lll}
||\cons_T(\underline{x})||_T  & \geq & ||\cons_T(g^{-1}(g(\underline{x})))||_T \\
\ &\geq &
||\cons_T(f^{-1}(g(\underline{x})))||_T\\
\ &\geq & f^{-1}(x -\log(x)^{\oo{1}})\\
 \ & \geq & x^{\epsilon}.  
\end{array}
\]
Here,  $\frac{1}{\epsilon}$ is about the size of the degree of $f$, whence
$0<\epsilon<1$.
\end{proof}

\subsection{Variations}

Most likely, it is possible to use any variation of the proof of G\"odel's second incompleteness theorem to get Theorem \ref{pudlakfriedman}. In particular, one can consider the proof that uses a fixed point of
\[
\delta(x) \leftrightarrow \Pr_T(x,\code{\neg \delta (\dot{x})}) .
\]
Again, it is easy to see that $T \vdash_x \neg \delta (\numeral{x})$ yields a contradiction. An extra application of reflection is needed to show that
$T\nvdash_x \delta(\numeral{x})$. 
\medskip

\noindent
It is also possible to run the same argument with the following fixed point.
\[
\delta(x) \leftrightarrow \neg \Pr_T (h(x),\code{\delta(\dot{x})})
\] 
Of course, the representation of $h$ should not block the provable completeness for $\Sigma^0_1$ (or $\exists \Sigma^b_1$) sentences that is needed in the argument. 
The function $h$ must thus be $\Sigma_1^0$ representable. In other words, $h$ should be a recursive function.
In the light of Question \ref{epsilon}, this fixed point only makes things worse.

However, this fixed point gives rise to true statements with very long proofs. This shall be exploited later on. Therefore it is worth while to restate some easy properties of this fixed point. We shall require that $h$ be some provably unbounded (that is, goes provably to infinity) recursive function.

\begin{fact}\label{fact:LongProofs}
Let $S$ be a sound theory and $\delta$ such that $S\vdash \delta(x) \leftrightarrow \neg \Pr_S (h(x),\code{\delta(\dot{x})})$. Then,
\begin{enumerate}
\item
$||\delta(\numeral{n})||_S > h(n)$

\item
$\mathbb{N} \models \forall n \ \delta (n)$

\item
$\forall n \  S \vdash \delta (\numeral{n})$

\item\label{bewijsbaarequiaanfalsum}
$S\nvdash \forall x\ \delta(x)$

\end{enumerate}
\end{fact}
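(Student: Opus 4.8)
The plan is to obtain items 1--3 quickly from soundness of $S$, the fixed-point equivalence, and a bounded search, and to isolate item 4 (the genuine $\omega$-incompleteness phenomenon) as the real difficulty. For item 2 I would argue at the meta level: since $S\vdash \delta(x)\leftrightarrow\neg\Pr_S(h(x),\code{\delta(\dot x)})$ and $S$ is sound, this equivalence holds in $\mathbb N$; so if $\mathbb N\not\models\delta(n)$ for some $n$, then $\mathbb N\models\Pr_S(h(n),\code{\delta(\numeral n)})$, i.e.\ there is an actual $S$-proof of $\delta(\numeral n)$ of length at most $h(n)$, whence $S\vdash\delta(\numeral n)$ and, by soundness, $\mathbb N\models\delta(n)$ --- a contradiction. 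Thus $\mathbb N\models\forall n\,\delta(n)$, which is item 2.

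Items 1 and 3 then come together. By item 2 the sentence $\neg\Pr_S(h(n),\code{\delta(\numeral n)})$ is true, so $\delta(\numeral n)$ has no $S$-proof of length at most $h(n)$. To get item 3 I would prove this true sentence in $S$ by brute force and then apply the fixed point. Unwinding the definition of $\Pr_S$, it is the \emph{bounded} statement $\forall\,|\pi|{\le}h(\numeral n)\ \neg{\sf Proof}_S(\pi,\code{\delta(\numeral n)})$ ranging over the finitely many $\pi<2^{h(n)+1}$; for each concrete $\pi$ the decidable sentence $\neg{\sf Proof}_S(\numeral\pi,\code{\delta(\numeral n)})$ is true and hence provable in $S$, and assembling these finitely many proofs under the bounded quantifier gives $S\vdash\neg\Pr_S(h(\numeral n),\code{\delta(\numeral n)})$, so the fixed-point equivalence yields $S\vdash\delta(\numeral n)$. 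This proof is at least exponential in $h(n)$, exactly as item 1 predicts, and there is no circularity: the proof we build is far longer than $h(n)$ and so is not among the short candidates it inspects. Item 1 now follows at once, since $\delta(\numeral n)$ is provable (item 3) but has no proof of length $\le h(n)$, giving $||\delta(\numeral n)||_S>h(n)$.

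Item 4 is the hard part. Here the idea is that provability of the \emph{uniform} statement would collapse the lower bound of item 1. Suppose $S\vdash\forall x\,\delta(x)$ by a proof of some fixed length $c$. Universal instantiation proves each $\delta(\numeral n)$ by appending the step $\forall x\,\delta(x)\to\delta(\numeral n)$, at a cost of only $\oo{\log n}$ symbols (the efficient numeral $\numeral n$ has logarithmic length), so $||\delta(\numeral n)||_S\le c+\oo{\log n}$; comparing with item 1 forces $h(n)<c+\oo{\log n}$ for every $n$, contradicting the unboundedness of $h$ once it outgrows this logarithmic bound. The delicate points --- and the place where the hypothesis on $h$ is genuinely used --- are to verify that the instantiation cost really is logarithmic under the efficient coding and that $h$ dominates it (harmless for the large $h$ used later to manufacture long proofs). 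In reserve I would keep the fully formalized variant: by a Theorem \ref{theorem:sigmacomplete}-style formalization, $S\vdash\forall x\,\delta(x)$ yields $S\vdash\forall x\,\Pr_S(g(x),\code{\delta(\dot x)})$ with $g(x)=\oo{|x|}$, which together with $S\vdash\forall x\,\neg\Pr_S(h(x),\code{\delta(\dot x)})$ and monotonicity of the proof-length bound contradicts the consistency of $S$ as soon as $h$ dominates $g$.
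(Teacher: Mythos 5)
Your handling of items 1--3 is exactly the routine verification the paper has in mind when it says the facts are ``pretty easy to verify'': item 2 from soundness plus the fixed point, item 3 by brute-force listing of the at most $2^{h(n)}$ candidate proofs (the same device the paper itself uses in the introduction to argue that ${\sf Con}_T(\numeral{n})$ is provable at all), and item 1 by combining the two; your ordering correctly avoids circularity. For item 4, however, you take a genuinely different route from the paper. The paper's indicated proof uses the provable unboundedness of $h$ to establish $S\vdash \forall x\,\delta(x) \leftrightarrow \cons(S)$ and then invokes G\"odel's second incompleteness theorem; your ``reserve'' variant (formalized $\Sigma_1$-completeness giving $\Pr_S(g(x),\code{\delta(\dot{x})})$ against $\neg\Pr_S(h(x),\code{\delta(\dot{x})})$) is essentially a finitary cousin of that route, but your primary argument is more elementary: it derives a contradiction between cheap universal instantiation, $||\delta(\numeral{n})||_S \leq c + \oo{\log n}$, and the lower bound of item 1, needing neither G\"odel II nor any formalized completeness. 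What the paper's route buys is the stronger and independently informative equivalence of $\forall x\,\delta(x)$ with $\cons(S)$; what yours buys is transparency and minimal machinery. Your caveat about the growth of $h$ is a virtue rather than a defect: literal (provable) unboundedness does not suffice for either argument. Your proof needs $h(n) - C\log n$ to be unbounded, but so does the paper's, since the left-to-right direction of the equivalence manufactures, inside $S$, an ex falso proof of $\delta(\numeral{x})$ costing $\ohm{\log x}$ symbols just to write its conclusion, and $h(x)$ must eventually exceed this. Indeed, for $h(n) = o(\log n)$ item 4 is simply false: under the paper's convention that length counts all symbols, a proof is at least as long as the formula it proves, so $\neg\Pr_S(h(x),\code{\delta(\dot{x})})$ becomes provably true uniformly for large $x$, and $S\vdash \forall x\,\delta(x)$ follows using item 3 for the finitely many remaining instances. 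So the hypothesis on $h$ must in any case be read as dominating the logarithmic instantiation cost, which is harmless for the fast-growing $h$ the paper uses later.
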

\noindent
These facts are pretty easy to verify. At \ref{bewijsbaarequiaanfalsum} the provable unboundedness of $h$ is used to see that 
$S\vdash \forall x\ \delta(x) \leftrightarrow {\sf Con}(S)$.
Now, using these facts, we can give easy proofs of the following two well known propositions.
\begin{proposition}\label{predlogica}
For any recursive function $h$, there exists a series of provable predicate logical tautologies $\varphi_n$ of which the length of proofs in predicate logic are not bounded by $h(|\varphi_n|)$.
\end{proposition}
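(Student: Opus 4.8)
The plan is to feed Fact \ref{fact:LongProofs} into the standard translation between provability in a finitely axiomatized theory and predicate-logical validity. First I would fix once and for all a finitely axiomatized sound theory $S$ satisfying the hypotheses of Fact \ref{fact:LongProofs}; Robinson's arithmetic $Q$ will do, since it is finitely axiomatized, sound, carries the diagonal lemma with provable equivalence, and is $\Sigma_1$-complete, which is all the Fact needs. Write $\alpha_S$ for the (finite) conjunction of the axioms of $S$. The point of insisting on finite axiomatizability is that $\alpha_S$ is then a single formula of fixed length, independent of $n$.

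Given the recursive $h$, I would apply Fact \ref{fact:LongProofs} not to $h$ itself but to a slightly padded recursive function $g$, say $g(x):=h(x)+x$, after first replacing $h$ by a monotone recursive majorant (which is harmless, as beating the majorant beats $h$). Padding by the summand $x$ serves two purposes: it makes $g$ provably unbounded no matter how $h$ behaves, so the hypotheses of Fact \ref{fact:LongProofs} are genuinely met, and it supplies a slack term that will swallow the small overhead appearing below. Fact \ref{fact:LongProofs} then yields a fixed point $\delta$ with $S\vdash \delta(x)\leftrightarrow \neg\Pr_S(g(x),\code{\delta(\dot{x})})$, for which items 1 and 3 give $S\vdash\delta(\numeral{n})$ for every $n$, while $||\delta(\numeral{n})||_S>g(n)$.

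Now I would set $\varphi_n:=(\alpha_S\to\delta(\numeral{n}))$. Since $S\vdash\delta(\numeral{n})$ and $S$ is finitely axiomatized, the deduction theorem makes $\varphi_n$ a genuine predicate-logical validity. For the length bookkeeping, $\delta$ is a single fixed formula, so $|\delta(\numeral{n})|=\oo{\log n}$ and hence $|\varphi_n|=\oo{\log n}$; in particular $|\varphi_n|\leq n$ for all large $n$. Conversely, any predicate-logical proof of $\varphi_n$ turns into an $S$-proof of $\delta(\numeral{n})$ by adjoining the finitely many axioms $\alpha_S$ and one modus ponens step, at a cost of only $\oo{\log n}$ extra symbols. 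Combining this with $||\delta(\numeral{n})||_S>g(n)$, the shortest predicate-logical proof of $\varphi_n$ has length at least $g(n)-\oo{\log n}=h(n)+n-\oo{\log n}$, which exceeds $h(n)$ for all large $n$; since $h$ is monotone and $|\varphi_n|\leq n$, this is in turn $\geq h(|\varphi_n|)$. Hence the predicate-logical proofs of the $\varphi_n$ are not bounded by $h(|\varphi_n|)$.

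The one place demanding care, and the step I expect to be the main obstacle, is exactly this length bookkeeping around the argument of $h$. The function $h$ is evaluated at $|\varphi_n|$, which is only logarithmic in $n$, whereas the lower bound on proof length comes out in terms of $g(n)$, which is exponentially larger; the finite axiomatizability of $S$ is precisely what keeps $|\varphi_n|$ logarithmic (an infinitely axiomatized $S$ would let the premise grow with $n$ and wreck the comparison), and the padding term $x$ in $g$ is what guarantees the bound survives the $\oo{\log n}$ conversion overhead without any circular dependence of $g$ on the size of $\delta$. Everything else is routine: the fixed point and the two quoted properties are delivered by Fact \ref{fact:LongProofs}, and the passage between $S$-proofs and predicate-logical proofs is the usual deduction-theorem translation.
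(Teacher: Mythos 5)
Your proposal is correct and is essentially the paper's own proof: the paper likewise fixes a finitely axiomatized theory (it takes \isig{1} rather than $Q$), invokes the fixed point of Fact \ref{fact:LongProofs}, and uses $\bigwedge \isig{1} \to \delta(\numeral{n})$ as the tautologies via the deduction theorem. Your replacement of $h$ by a monotone majorant padded to $g(x)=h(x)+x$ and the $\oo{\log n}$ bookkeeping merely make explicit details the paper's one-line proof leaves implicit.
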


\begin{proof}
Take a strong enough finitely axiomatized arithmetic theory, for example
\isig{1}. Consider 
\[
\isig{1} \vdash \delta(x) \leftrightarrow \neg \Pr_{\isig{1}} (h(x),\code{\delta(\dot{x})}).
\]
Then $\bigwedge \isig{1}\to \delta(\numeral{n})$ suffices.
\end{proof}

\begin{proposition}
There is an explicit series of provable predicate logical tautologies $\psi_n$ whose proofs are not bounded by any recursive function.
\end{proposition}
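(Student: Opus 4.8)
The plan is to upgrade Proposition \ref{predlogica}, which produces for \emph{each} recursive $h$ a separate sequence beating $h$, into a single \emph{explicit} sequence that defeats every recursive bound at once. As in Proposition \ref{predlogica} the natural measure is the size of the formula: I will produce an explicit sequence $\psi_n$ of predicate-logical tautologies such that no recursive function $r$ satisfies $||\psi_n|| \leq r(|\psi_n|)$ for all $n$. First I would fix a finitely axiomatized base such as \isig{1} and, for every index $e$ of a Turing machine, form the fixed point from the Variations subsection using the $e$-th partial recursive function $g_e$ as the bounding function, i.e.\ $\isig{1} \vdash \delta_e(x) \leftrightarrow \neg \Pr_{\isig{1}}(g_e(x), \code{\delta_e(\dot{x})})$, and consider the candidate tautologies $\bigwedge \isig{1} \to \delta_e(\numeral{n})$.

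Next I would assemble the actual sequence $\psi_n$ by a dovetailed proof search: enumerate all triples consisting of $e$, $n$, and a predicate-logical proof, and output $\bigwedge \isig{1} \to \delta_e(\numeral{n})$ precisely when a proof of $\delta_e(\numeral{n})$ in \isig{1} is discovered. This makes the sequence explicit and guarantees that every $\psi_n$ is a genuine provable tautology. For the lower bound I would invoke the first item of Fact \ref{fact:LongProofs}: whenever $g_e$ is total (and, without loss of generality, strictly increasing, hence provably unbounded), the instance $\delta_e(\numeral{n})$ is \isig{1}-provable with $||\delta_e(\numeral{n})||_{\isig{1}} > g_e(n)$, so it occurs in the enumeration and its proof length exceeds $g_e(n)$, while its formula size is only $\oo{\log n}$ because $\numeral{n}$ is the efficient numeral and $\delta_e$ is a fixed template. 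Given any recursive candidate bound $r$, I would then pick a concrete total recursive $g$ with $g(n) > r(c\log n + d)$ for the constants $c,d$ governing the formula size, take any index $e$ of $g$, and read off infinitely many $n$ for which $||\psi||>g(n)>r(|\psi|)$ at the corresponding member $\psi$, contradicting $||\psi_m|| \leq r(|\psi_m|)$.

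The main obstacle is the partiality of the $g_e$: for indices $e$ of non-total machines the fixed point need not yield a provable $\delta_e(\numeral{n})$, so the raw family $\bigwedge\isig{1} \to \delta_e(\numeral{n})$ is not a family of tautologies and cannot be used directly. The dovetailed proof search is exactly what sidesteps this, since it silently discards the divergent, non-provable instances while automatically retaining every instance coming from a total $g_e$, and these total indices are all the diagonal lower bound needs. A secondary point to get right is that the bound must be measured against the formula size $|\psi_n|$ rather than the index $n$; this is forced, because re-enumerating by proof discovery could otherwise list the short proofs first and hide the long ones, whereas phrasing the conclusion in terms of $|\psi_n|$ (exactly as Proposition \ref{predlogica} does) is robust to the order of enumeration. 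Finally, to apply Fact \ref{fact:LongProofs} cleanly I would restrict attention to strictly increasing $g_e$, which costs nothing since the diagonal only ever needs the specific increasing functions $g$ built from the candidate bound $r$.
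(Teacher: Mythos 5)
Your proposal is correct and is essentially the paper's own argument: the paper's entire proof is the single line ``by diagonalization from Proposition~\ref{predlogica},'' and your dovetailed enumeration over the fixed points $\delta_e(x) \leftrightarrow \neg \Pr_{\isig{1}}(g_e(x),\code{\delta_e(\dot{x})})$ is exactly that diagonalization carried out in detail, with the partiality of the $g_e$ handled sensibly by proof search. One small repair: as written, your choice of $g$ with $g(n) > r(c\log n + d)$ is circular, since the constants $c,d$ depend on the index $e$ of $g$; this is fixed at no cost by taking $g(n) := 1 + \max_{m\leq n} r(m)$ and observing that $c\log n + d \leq n$ for all sufficiently large $n$, so the index-dependent constants never need to be known in advance (alternatively, invoke the recursion theorem).
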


\begin{proof}
By diagonalization from Proposition \ref{predlogica}.
\end{proof}
\noindent

\section{RE facops do not exist}\label{section:ReFacopsDoNotExist}

The existence of a facop or a ufacop is very counter-intuitive. However, as is to be expected, every attempt to prove the non-existence fails. In this section we shall present such an attempt by dropping the requirement that the theories for which a facop should have short proofs be poly-time decidable. 

So, in this section, we consider sound theories with an RE axiomatization. For this class of theories we can show that there is no "strongest theory" $S$ having short proofs for finite consistency statements of any other RE theory.
The final result is stated in Theorem \ref{theo:REpairs}. Actually the result is quite strong. It says that for any theory $S$, there is a theory $T$ whose proofs in $S$ of its consistency statements have non-recursive lengths.

The idea of the proof is by generalizing the proof of Theorem 
\ref{pudlakfriedman} and Fact \ref{fact:LongProofs}. First we state a lemma that articulates some conditions on $S$ and $T$ under which
$||{\sf Con}_T(\underline{x})||_S\geq h(x)$. We have chosen $S$ as to refer to \emph{slow}. The next two lemmata tell us how to construct, given an $S$, a theory $T$ such that the conditions hold.

\begin{lemma}\label{lemm:mainidea}
Let $S$ and $T$ be consistent RE theories containing $S_2^1$. Let $\delta(x)$ be such that 
\[
S^1_2\vdash \forall x\ (\delta (x) \leftrightarrow 
\neg \Pr_S(h(x), \code{\delta (\dot{x})})) 
\]
for a certain recursive $h$ with $h = \ohm{x}$.
Furthermore, let $S$ and $T$ be such that $T$ has speed-up over $S$ in 
the following sense.
\renewcommand{\theenumi}{(\roman{enumi})}
\renewcommand{\labelenumi}{\theenumi}
\begin{enumerate}
\item \label{item:speedup1}
$S \bewijs{h(x)} \delta (\numeral{x}) \ \ \Rightarrow \ \  
T\bewijs{\oo{x}} \Pr_S(h(\numeral{x}), \code{\delta (\numeral{x})})$,

\item \label{item:speedup2}
$S \bewijs{h(x)} \delta (\numeral{x}) \ \ \Rightarrow \ \  
T\bewijs{\oo{x}} \delta (\numeral{x})$.

\end{enumerate}
\noindent
Moreover, let \ref{item:speedup1} and \ref{item:speedup2} be formalizable in 
$S$. Then it holds that
\[
|| \cons_T (\numeral{x})||_S \geq h(\mathcal{O}(x)) .
\]
\end{lemma}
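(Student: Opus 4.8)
The plan is to lift the diagonalization of Theorem \ref{pudlakfriedman} to the two-theory setting, reading $\delta(x)$ as the self-referential assertion ``$\numeral{x}$ has no $S$-proof of length ${\leq}\,h(x)$''. The starting observation is that the defining equivalence, being available already in $S^1_2\subseteq S,T$, yields the dual reading
\[
S^1_2 \vdash \forall x\ (\neg\delta(x) \leftrightarrow \Pr_S(h(x), \code{\delta(\dot{x})})),
\]
so a short $S$-proof of $\delta(\numeral{x})$ is literally a witness for $\neg\delta(\numeral{x})$. The speed-up hypotheses \ref{item:speedup1} and \ref{item:speedup2} are exactly what turns such a witness into a short $T$-refutation: \ref{item:speedup1} hands $T$ a short proof of $\Pr_S(h(\numeral{x}),\code{\delta(\numeral{x})})$, i.e.\ of $\neg\delta(\numeral{x})$ via the fixed point, while \ref{item:speedup2} hands $T$ a short proof of $\delta(\numeral{x})$ itself.

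First I would run this externally to lower-bound $||\delta(\numeral{x})||_S$. Assume toward a contradiction that $S\bewijs{h(x)}\delta(\numeral{x})$ for some $x$. Then \ref{item:speedup1} together with the fixed-point identity gives $T\bewijs{\oo{x}}\neg\delta(\numeral{x})$, and \ref{item:speedup2} gives $T\bewijs{\oo{x}}\delta(\numeral{x})$; a constant amount of propositional bookkeeping combines these into $T\bewijs{g(x)}0{=}1$ for some $g(x)=\oo{x}$, contradicting the consistency of $T$. Hence $S$ has no proof of $\delta(\numeral{x})$ of length ${\leq}\,h(x)$, i.e.\ $||\delta(\numeral{x})||_S > h(x)$ for every $x$ (with value $+\infty$ allowed if $\delta(\numeral{x})$ is not $S$-provable at all). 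Note this step uses only consistency of $T$, not soundness of $S$.

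Next I would formalize the same reasoning inside $S$. Since \ref{item:speedup1} and \ref{item:speedup2} are assumed formalizable in $S$, the displayed external argument runs uniformly in $x$, giving $S\vdash\forall x\,(\neg\delta(x)\to\neg\cons_T(g(x)))$ and hence, by contraposition,
\[
S \vdash \forall x\ (\cons_T(g(x)) \to \delta(x)).
\]
Instantiating at the efficient numeral costs only $\log(x)^{\oo{1}}$ symbols, so $||\cons_T(g(\numeral{x}))\to\delta(\numeral{x})||_S=\log(x)^{\oo{1}}$, and modus ponens yields
\[
||\delta(\numeral{x})||_S \leq ||\cons_T(g(\numeral{x}))||_S + \log(x)^{\oo{1}}.
\]
Combining with the external bound $||\delta(\numeral{x})||_S > h(x)$ gives $||\cons_T(g(\numeral{x}))||_S \geq h(x)-\log(x)^{\oo{1}}$. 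Because $h=\ohm{x}$ dominates $\log(x)^{\oo{1}}$, the right side is $\ohm{h(x)}$; re-parametrizing by $n=g(x)$ with $g=\oo{x}$ eventually invertible (so $x=\ohm{n}$) converts this into $||\cons_T(\numeral{n})||_S \geq h(\oo{n})$, the desired conclusion.

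The hard part will be the length bookkeeping in the formalized step: I must verify that gluing the two $\oo{x}$-bounded $T$-proofs from \ref{item:speedup1} and \ref{item:speedup2} into a $T$-proof of $0{=}1$ stays within an $\oo{x}$ bound \emph{provably in $S$}, and, crucially, that the implication is obtained as a single $\forall x$ statement rather than an $x$-indexed schema, since only a uniform proof can be instantiated at $\numeral{x}$ at the cheap $\log(x)^{\oo{1}}$ cost. A secondary delicate point is the final re-parametrization, where one must ensure $g^{-1}=\ohm{\mathrm{id}}$ and interpret $h(\oo{n})$ consistently with the informal $\oo{}/\ohm{}$ conventions of Theorem \ref{pudlakfriedman}, so that the absorbed constants are legitimate. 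These are precisely the places where the hypotheses $h=\ohm{x}$ and the $S$-formalizability of \ref{item:speedup1} and \ref{item:speedup2} are indispensable.
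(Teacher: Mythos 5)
Your proposal is correct and takes essentially the same route as the paper's proof: the same use of the fixed point to convert a hypothetical $S\bewijs{h(x)}\delta(\numeral{x})$ via \ref{item:speedup1} and \ref{item:speedup2} into $T\bewijs{\oo{x}}0{=}1$, yielding $||\delta(\numeral{x})||_S > h(x)$, followed by the $S$-provable implication $\cons_T(g(x))\to\delta(x)$ instantiated at the numeral for only $\log(x)^{\oo{1}}$ symbols, and the concluding subtraction and reparametrization using $g(x)=\oo{x}=\oo{h(x)}$. The only (harmless) presentational difference is that you run the contradiction externally first---correctly observing that this step needs only the truth of \ref{item:speedup1}--\ref{item:speedup2} and the consistency of $T$---whereas the paper carries out the reasoning inside $S$ from the start and then steps out, invoking soundness of $S$ together with consistency of $T$.
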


\begin{proof}
Reason in $S$. Suppose that 

\begin{equation}\label{eq:lemm:assumption}
S\bewijs{h(x)}\delta(\numeral{x}).
\end{equation}
Then, by Assumption \ref{item:speedup1}, we get
\begin{equation}
T\bewijs{\oo{x}}\Pr_S(h(\numeral{x}),\delta(\numeral{x})).
\label{eq:lemm:speedup}
\end{equation}
\noindent
Combining \eqref{eq:lemm:assumption} and \ref{item:speedup2}, we also get
\[
T\bewijs{\oo{x}} \delta (\numeral{x}) .
\]
As the fixed point equation is also provable in $T$, i.e.
\[
T\bewijs{\oo{1}} \forall x\ (\delta(x)   \leftrightarrow \neg 
\Pr_S(h(x), \code{\delta(\dot{x})})    ),
\]
we get
\[
T\bewijs{\oo{x}+\log(x)^{\oo{1}}} \neg \Pr_S (h(\numeral{x}), \code{\delta(\numeral{x})}).
\]
Combining this with \eqref{eq:lemm:speedup} we obtain
\[
T\bewijs{\oo{x}+\log(x)^{\oo{1}}} 0=1 .
\]
\ 
\medskip

\noindent
We now no longer reason in $S$. Considering the above reasoning, together with the fact that $S$ is sound and $T$ is consistent, we see that
\begin{equation}\label{eq:largeproofs}
|| \delta(\numeral{x}) ||_S \geq h(x).
\end{equation}
Also, from the above reasoning, we have 
\[
S\vdash \Pr_S(h(x), \delta (x)) \to \neg \cons_T(g(x))
\]
for some function $g(x)=\oo{x + \log(x)^{\oo{1}}}$. Consequently also
\[
S \vdash \cons_T(g(x)) \to \delta(x) \ \ \ \ \ (\leftrightarrow 
\neg \Pr_S(h(x), \delta(x))),
\]
and we get that
\[
|| \cons_T(g(\numeral{x})) \to \delta(\numeral{x}) ||_S = \log(x)^{\oo{1}}.
\]
This implies
\[
|| \cons_T(g(\numeral{x})) ||_S \geq || \delta (\numeral{x})||_S - \log(x)^{\oo{1}}.
\]
Because $g(x)=\oo{x}=\oo{h(x)}$, by \eqref{eq:largeproofs} we obtain the required result, that is,

\[
|| \cons_T(\numeral{x}) ||_S\geq h(\mathcal{O}(x)).
\]
\end{proof}
\noindent
The next lemma provides an approach so that we can concentrate on Item 
\ref{item:speedup1}.

\begin{lemma}\label{lemm:reflectionisgood}
Let $S$ and $T$ be (sound \& RE) such that
\[
\{  \forall \vec{x} \ (\Box_S \varphi(\dot{\vec{x}}) \to \varphi(\vec{x}))\} 
\subseteq \mbox{The axioms of $T$}
\]
and that moreover (verifiably in $S$)
\begin{enumerate}
\item\label{lemm:item:rig}
$S\bewijs{h(x)} \delta(\numeral{x}) \Rightarrow T\bewijs{\oo{x}}
\Pr_S(h(\numeral{x}),\delta(\numeral{x}))$
\end{enumerate}
for some fixed formula $\delta(x)$, then it holds (verifiably in $S$) that
\[
S\bewijs{h(x)}\delta(\numeral{x}) \Rightarrow T \bewijs{\oo{x}} \delta(\numeral{x}).
\]
\end{lemma}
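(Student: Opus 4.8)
The plan is to exploit the uniform reflection axioms sitting inside $T$ in order to pass from \emph{provability} of $\delta$ to $\delta$ itself, feeding them the cheap bounded provability supplied by hypothesis \ref{lemm:item:rig}. By assumption the instance of uniform reflection for the fixed formula $\delta$, namely
\[
\forall x\ (\Box_S \delta(\dot x) \to \delta(x)),
\]
is among the axioms of $T$; here $\Box_S\delta(\dot x)$ is the \emph{unbounded} provability predicate $\Pr_S(\code{\delta(\dot x)})$, that is $\exists \pi\ {\sf Proof}_S(\pi,\code{\delta(\dot x)})$. So, reasoning in $S$, I would assume $S\bewijs{h(x)}\delta(\numeral x)$ and first invoke \ref{lemm:item:rig} to obtain $T\bewijs{\oo{x}}\Pr_S(h(\numeral x),\code{\delta(\numeral x)})$.

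The next step is to weaken this bounded provability statement to the unbounded one that reflection can consume. Since $\Pr_S(h(\numeral x),\code{\delta(\numeral x)})$ is literally $\exists \pi\,(|\pi|{\leq} h(\numeral x)\wedge {\sf Proof}_S(\pi,\code{\delta(\numeral x)}))$ while $\Box_S\delta(\numeral x)$ is $\exists \pi\,{\sf Proof}_S(\pi,\code{\delta(\numeral x)})$, the implication
\[
\Pr_S(h(\numeral x),\code{\delta(\numeral x)}) \to \Box_S\delta(\numeral x)
\]
is a purely logical weakening of an existential quantifier and therefore has a proof in $S^1_2$ (hence in $T$) whose length is polynomial in the size of the formulas involved, i.e.\ $\log(x)^{\oo{1}}$, negligible against $\oo{x}$. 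Chaining this with the output of \ref{lemm:item:rig} gives $T\bewijs{\oo{x}}\Box_S\delta(\numeral x)$, and a single modus ponens against the reflection axiom instantiated at $\numeral x$ yields $T\bewijs{\oo{x}}\delta(\numeral x)$. Finally I would observe that each ingredient --- the appeal to \ref{lemm:item:rig}, the quantifier weakening, the instantiation of the reflection axiom, and the modus ponens steps --- is elementary and formalizes in $S$, so the conclusion holds verifiably in $S$, as required. This is exactly condition \ref{item:speedup2} of Lemma \ref{lemm:mainidea}, so the lemma reduces that condition entirely to \ref{item:speedup1}.

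The routine parts are the quantifier weakening and the bookkeeping of proof lengths; the only point demanding genuine care is confirming that these two extra inferences do not disturb the $\oo{x}$ bound. Substituting the numeral $\numeral x$ and the closed term $h(\numeral x)$ produces formulas of size $\log(x)^{\oo{1}}$, the instantiated reflection axiom has comparable size, and each added inference contributes only $\log(x)^{\oo{1}}$, so the total proof length stays within $\oo{x}$. The subtlety to watch is purely one of formalization: that the reflection instance available in $T$ is genuinely the one for the externally given $\delta$, and that the whole derivation --- including the weakening of the bounded to the unbounded provability predicate --- is captured by an $S$-proof, which is what \ref{item:speedup2} being ``formalizable in $S$'' demands.
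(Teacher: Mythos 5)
Your proposal is correct and follows essentially the same route as the paper: apply hypothesis \ref{lemm:item:rig}, weaken the bounded provability $\Pr_S(h(\numeral{x}),\code{\delta(\numeral{x})})$ to $\Box_S\delta(\numeral{x})$, and discharge via the reflection axiom for $\delta$ in $T$, with all added inferences of length $\log(x)^{\oo{1}}$ and hence within the $\oo{x}$ bound. You merely make explicit the existential weakening and length bookkeeping that the paper's proof compresses into ``hence also'' and ``adding just one more line.''
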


\begin{proof}
(Reason in $S$.) Suppose that $S\bewijs{h(x)}\delta(\numeral{x})$. 
Because of \ref{lemm:item:rig}, we get that 
\[
T\bewijs{\oo{x}}\Pr_S(h(\numeral{x}), \delta(\numeral{x})),
\]
hence also
\[
T\bewijs{\oo{x}} \Box_S \delta(\numeral{x}).
\]
Adding just one more line to the $T$-proof consisting of the axiom 
$\Box_S \delta(\numeral{x}) \to \delta(\numeral{x})$ gets us 
the required
\[
T\bewijs{\oo{x}} \delta(\numeral{x}),
\]
as the number of symbols in
$\Box_S \delta(\numeral{x}) \to \delta(\numeral{x})$ 
is just $\oo{\log(x)}$.
\end{proof}
Note that this proof makes no further assumptions on the nature of 
$\delta(x)$. For the particular $\delta(x)$ we are interested in, it would suffice to demand that $T \supseteq {\sf Con(S)}$.

\begin{lemma}\label{lemm:REpair}
Let $S$ be a given sound \& RE theory. Let $S'$ be defined so that its axioms are precisely the theorems of $S$. Next, define $T$ so that its axioms are the axioms of $S'$ together with 
$\{  \forall \vec{x}\  (\Box_S \varphi(\dot{\vec{x}}) \to \varphi(\vec{x}))\}$.
Then, $S$ and $T$ satisfy  \ref{item:speedup1} and \ref{item:speedup2}
of Lemma \ref{lemm:mainidea}.
\end{lemma}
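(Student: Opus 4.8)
The plan is to exploit the two features that are built into $T$ separately: that every theorem of $S$ is \emph{literally} an axiom of $T$ (this is the role of $S'$), and that $T$ contains the full uniform reflection schema for $S$. These are tailored to discharge conditions \ref{item:speedup2} and \ref{item:speedup1} respectively, so I would treat the two conditions by quite different routes and then let the second feed the first.

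First I would dispose of \ref{item:speedup2} by appeal to Lemma \ref{lemm:reflectionisgood}. By construction the axioms of $T$ include $\{\forall\vec{x}\ (\Box_S\varphi(\dot{\vec{x}})\to\varphi(\vec{x}))\}$, so the hypothesis of that lemma is met verbatim. Hence it suffices to establish \ref{item:speedup1}; condition \ref{item:speedup2} then follows, verifiably in $S$, by appending a single reflection axiom $\Box_S\delta(\numeral{x})\to\delta(\numeral{x})$ (of length $\oo{\log x}$) to the $T$-proof produced for \ref{item:speedup1}, exactly as in the proof of Lemma \ref{lemm:reflectionisgood}.

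The whole task thus collapses to \ref{item:speedup1}: from $S\bewijs{h(x)}\delta(\numeral{x})$ I must produce, verifiably in $S$, a $T$-proof of $\Pr_S(h(\numeral{x}),\code{\delta(\numeral{x})})$ of length $\oo{x}$. The key observation is that the antecedent $S\bewijs{h(x)}\delta(\numeral{x})$ is \emph{nothing but} the truth of the $\Sigma_1$ sentence $\Pr_S(h(\numeral{x}),\code{\delta(\numeral{x})})$, so what is really wanted is a formalized-in-$S$ instance of provable completeness for $T$, carrying a linear length bound. Here the definition of $S'$ supplies the leverage: under the hypothesis $\delta(\numeral{x})$ is a theorem of $S$, hence by construction an axiom of $S'$ and of $T$, so $T$ proves $\delta(\numeral{x})$ in essentially one line. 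Reasoning inside $S$ under the assumption $\Pr_S(h(\numeral{x}),\code{\delta(\numeral{x})})$, I would use this axiom-hood, rather than any explicit $S$-derivation, to manufacture the required short $T$-proof of the provability statement itself, keeping careful track of the polynomial bookkeeping so that the total length stays $\oo{x}$.

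The main obstacle is precisely this length control. The naive route---exhibiting the hypothesised $S$-proof of $\delta(\numeral{x})$ as an explicit witness for the existential quantifier in $\Pr_S(h(\numeral{x}),\code{\delta(\numeral{x})})$---costs on the order of $h(x)$ symbols and is therefore worthless once $h$ outgrows the linear bound (and in the intended application $h$ ranges over arbitrarily fast-growing recursive functions). Flattening $S$ to $S'$ is exactly what lets one sidestep re-deriving $\delta(\numeral{x})$ step by step: the point is to have $T$ certify the $S$-provability of $\delta(\numeral{x})$ from the bare fact that it is an axiom, not by reproducing a length-$h(x)$ derivation. Making this certification run inside $S$ within an $\oo{x}$ bound, and confirming that the coding overheads for the consistency statement (Lemma \ref{lemm:contranslation}) and the decision-exponent bookkeeping do not spoil linearity, is where the genuine work lies. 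Once \ref{item:speedup1} is in hand with this bound, the combination with \ref{item:speedup2} and the provable fixed-point equivalence is the routine contradiction already laid out in the proof of Lemma \ref{lemm:mainidea}.
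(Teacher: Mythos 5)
Your architecture matches the paper's up to the decisive step: like the paper, you reduce \ref{item:speedup2} to \ref{item:speedup1} via Lemma \ref{lemm:reflectionisgood}, and you correctly diagnose the central obstacle, namely that witnessing the existential quantifier in $\Pr_S(h(\numeral{x}),\code{\delta(\numeral{x})})$ by the hypothesised $S$-proof costs order $h(x)$ symbols (Theorem \ref{theorem:sigmacomplete} would likewise only give a bound polynomial in $h(x)$, since the witness is a proof of that length). But at the crux you apply the flattening of $S$ into $S'$ to the wrong formula and then explicitly defer the real step (``where the genuine work lies''). You propose to use the fact that $\delta(\numeral{x})$, being a theorem of $S$, is an axiom of $T$, and to have $T$ ``certify the $S$-provability of $\delta(\numeral{x})$ from the bare fact that it is an axiom.'' That route fails: having $\delta(\numeral{x})$ as an axiom gives $T$ a one-line proof of $\delta(\numeral{x})$ itself (which would in fact yield \ref{item:speedup2} directly), but it gives $T$ no proof at all, short or otherwise, of the bounded provability statement $\Pr_S(h(\numeral{x}),\code{\delta(\numeral{x})})$. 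For $T$ to exploit that axiomhood it would have to prove ``$\delta(\numeral{x})$ is an axiom of $S'$,'' and since the axiom set of $S'$ is the set of theorems of $S$ --- RE, not decidable --- this is itself the assertion $\exists \pi\, {\sf Proof}_S(\pi,\code{\delta(\numeral{x})})$, without the bound $h(\numeral{x})$ at that; the argument is circular.

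The missing idea, which is the entire content of the paper's short proof, is to flatten the provability statement itself. Reasoning in $S$ under the assumption $S\bewijs{h(x)}\delta(\numeral{x})$, the sentence $\Pr_S(h(\numeral{x}),\code{\delta(\numeral{x})})$ is a true $\Sigma_1$ sentence, hence ($S$-verifiably, by $\Sigma_1$-completeness) itself a theorem of $S$; therefore it is an axiom of $S'$, hence of $T$, and $T$ proves it by quoting the axiom, in length equal to that of the formula, namely $\oo{\log x}\leq\oo{x}$. This settles \ref{item:speedup1} with no bookkeeping beyond the length of the formula (thanks to efficient numerals), and Lemma \ref{lemm:reflectionisgood} then yields \ref{item:speedup2} exactly as you planned. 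A minor further point: your appeals to Lemma \ref{lemma:contranslation} and to decision-exponent bookkeeping are out of place here --- those belong to the poly-time (ufacop) setting of Section \ref{section:UfacopsEnUppses}, not to this RE construction.
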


\begin{proof}
Note that $S=S'$, whence $S'$ and $T$ are also sound RE theories.
Reason in $S$, and suppose that
$S\bewijs{h(x)} \delta(\numeral{x})$. Then also 
$S\vdash \Pr_S(h(\numeral{x}), \delta (\numeral{x}))$.
Notice that the length of $\Pr_S(h(\numeral{x}), \delta(\numeral{x}))$
is \oo{log(x)} so certainly $\oo{x}$, whence
\[
T\bewijs{\oo{x}}  \Pr_S(h(\numeral{x}), \delta(\numeral{x})).
\]
Lemma \ref{lemm:reflectionisgood} now yields the desired result.
\end{proof}

Note that the construction in Lemma \ref{lemm:REpair} works simultaneously for all recursive functions.
Thus, putting things together, we have now shown the following theorem, as announced at the beginning of this section.

\begin{theorem}\label{theo:REpairs}
For any sound RE theory $S$ 
there exists  another sound RE theory $T$ for which
for any   recursive function $h$
\[
|| \cons_T(\numeral{x})||_S \geq h(\mathcal{O}(x)).
\]
\end{theorem}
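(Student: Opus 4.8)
The plan is to assemble the three preceding lemmas into a single $\exists T\,\forall h$ statement. Given a sound RE theory $S$, I would first apply Lemma \ref{lemm:REpair} to produce the witnessing theory $T$: its axioms are the theorems of $S$ together with the uniform reflection schema $\{\forall\vec{x}\,(\Box_S\varphi(\dot{\vec{x}})\to\varphi(\vec{x}))\}$. The crucial observation is that this construction refers only to $S$ and in no way to any particular recursive function $h$, since the reflection schema ranges over \emph{all} formulas $\varphi$. Hence one and the same $T$ will serve for every recursive $h$, which is exactly what makes the quantifier order in the statement attainable.

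Next I would fix an arbitrary recursive $h$ and reduce to the case required by Lemma \ref{lemm:mainidea}. Because the assertion is a lower bound, replacing $h$ by any pointwise-larger recursive function only strengthens the conclusion, so without loss of generality I may take $h$ to be a provably unbounded recursive function with $h=\ohm{x}$ (for instance, pass from $h(x)$ to $\max(h(x),x)$ and close off under monotonicity; the original $h$ is then dominated by this one). For this $h$ I form, via the diagonal lemma inside $S^1_2$, the fixed point $\delta_h(x)$ satisfying
\[
S^1_2\vdash \forall x\ (\delta_h(x)\leftrightarrow\neg\Pr_S(h(x),\code{\delta_h(\dot{x})})).
\]

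With $T$, $h$, and $\delta_h$ in hand, the hypotheses of Lemma \ref{lemm:mainidea} are all met: $S$ and $T$ are consistent RE theories containing $S^1_2$, and Lemma \ref{lemm:REpair} (which invokes Lemma \ref{lemm:reflectionisgood}, whose proof makes no assumption on the shape of $\delta$) supplies the speed-up conditions \ref{item:speedup1} and \ref{item:speedup2} for $\delta_h$, verifiably in $S$. Applying Lemma \ref{lemm:mainidea} then gives
\[
||\cons_T(\numeral{x})||_S \geq h(\oo{x}).
\]
Since $h$ was an arbitrary recursive function, this is the theorem.

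The only real subtlety — the point I would emphasize rather than compute — is the order of construction: $T$ must be built before $h$ is chosen, yet the lower bound must hold for every $h$ simultaneously. This works precisely because the reflection schema populating $T$ is uniform over all formulas and therefore furnishes conditions \ref{item:speedup1} and \ref{item:speedup2} at once for every fixed point $\delta_h$; this is the content of the remark following Lemma \ref{lemm:REpair} that the construction works simultaneously for all recursive functions. Everything else is a routine invocation of the machinery already in place, so I would keep the written proof short, citing Lemma \ref{lemm:REpair} for the existence of $T$ and Lemma \ref{lemm:mainidea} for the bound.
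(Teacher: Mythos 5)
Your proposal is correct and takes essentially the same route as the paper, whose proof is exactly the one-line assembly you describe: obtain the $h$-independent theory $T$ from Lemma \ref{lemm:REpair} and invoke Lemma \ref{lemm:mainidea} for each recursive $h$. Your explicit attention to the quantifier order and the harmless replacement of $h$ by $\max(h(x),x)$ to secure $h=\Omega(x)$ only spell out details the paper leaves implicit (the former in its remark that the construction works simultaneously for all recursive functions).
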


\begin{proof}
For any such 
theory $S$, apply the construction as in Lemma \ref{lemm:REpair} to obtain a theory $T$ so that Lemma \ref{lemm:mainidea} yields
the required result.
\end{proof}
\begin{question}
Can Theorem \ref{theo:REpairs} also be proved for theories with a primitive recursive set of axioms? Which is the weakest class of theories for which Theorem \ref{theo:REpairs} can be proved?
\end{question}

\section{Speculations on poly-time diagonalizations}
Clearly, a theorem like Theorem \ref{theo:REpairs} can not be proved in full generality for poly-time theories. This is due to the observation we made before that $||{\sf Con}_T(\underline{x})||_S\leq f(x)$ for some $f$ which is exponential in in $x$. Of course, this observation hinges on the fact that it is poly-time decidable that an axiom of $T$ is indeed an axiom of $T$. And thus, by Theorem \ref{theorem:NPenumerable} the axiomhood has a short proof in $S$.

Having this in mind we immediately see why the proof 
of Theorem \ref{theo:REpairs} does not carry over to the setting of poly-time theories: If one starts out with a theory $S$ with a
$\Delta^b_1(\sonetwo)$ axiomatization, the trick in Lemma \ref{lemm:REpair} will
yield a genuinly $\Sigma^0_1$ axiomatized theory $T$.

One could think of defining the axioms of $S$ consisting of those theorems having a proof in some logarithmically short interval $[a,b]$ which is not too far away from the theorem. However, this is the same problem as we started with: given a provable formula, look for a short proof.

The conditions in Lemma \ref{lemm:mainidea} are formulated in quite a general way. A more promising way to obtain lower bounds for facops would be to look for other fixed points such that given a theory $S$, one can define a theory $T$ such that conditions \ref{item:speedup1} and \ref{item:speedup1} of  Lemma \ref{lemm:mainidea} are satisfied for this fixed point. 

The following conjecture does not seem fully unfeasible.

\begin{conjecture}\label{conj:desirable}
For every sound $\Delta^b_1(S^1_2)$ theory $S$ and for every $l\in \omega$, there exists a sound $\Delta_1^b(S^1_2)$ theory $T$ such that
\[
||\cons_T(\numeral{x})||_S > x^l .
\]
\end{conjecture}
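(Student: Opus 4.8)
The plan is to run Lemma \ref{lemm:mainidea} as the engine and to push all of its hypotheses down onto a single poly-time–checkable obstruction. Fix $l$ and set $h(x):=x^{l+1}$, which is recursive and satisfies $h=\ohm{x}$, and for which the conclusion $||\cons_T(\numeral{x})||_S\geq h(\oo{x})$ of Lemma \ref{lemm:mainidea} already yields $||\cons_T(\numeral{x})||_S>x^l$ for all large $x$ (the constant loss hidden in $\oo{x}$ is harmless since $(x/c)^{l+1}>x^l$ eventually). By the diagonal lemma I fix a formula $\delta(x)$ with
\[
\sonetwo\vdash\forall x\,(\delta(x)\leftrightarrow\neg\Pr_S(h(x),\code{\delta(\dot x)})),
\]
which is exactly the fixed point demanded by Lemma \ref{lemm:mainidea} and Fact \ref{fact:LongProofs}.

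The candidate theory is $T:=\sonetwo+\{\forall\vec x\,(\Box_S\varphi(\dot{\vec x})\to\varphi(\vec x))\}$, i.e.\ $\sonetwo$ augmented with the uniform reflection schema over $S$. Since $S$ is $\Delta^b_1(\sonetwo)$ its proof predicate ${\sf Proof}_S$ is poly-time, and deciding whether a sentence has the syntactic shape $\forall\vec x\,(\Box_S\varphi(\dot{\vec x})\to\varphi(\vec x))$ for some $\varphi$ is a poly-time syntactic test; hence $T$ is again a sound $\Delta^b_1(\sonetwo)$ theory, as the conjecture requires, and it contains $\sonetwo$. With reflection available, Lemma \ref{lemm:reflectionisgood} collapses the two speed-up hypotheses of Lemma \ref{lemm:mainidea} to one: it suffices to establish, verifiably in $S$, condition \ref{item:speedup1}, namely
\[
S\bewijs{h(x)}\delta(\numeral{x})\ \Rightarrow\ T\bewijs{\oo{x}}\Pr_S(h(\numeral{x}),\code{\delta(\numeral{x})}).
\]
Once \ref{item:speedup1} is in hand, the reflection axiom $\Box_S\delta(\numeral{x})\to\delta(\numeral{x})$ (of length $\oo{\log x}$) together with one modus ponens gives \ref{item:speedup2}, precisely as in the proof of Lemma \ref{lemm:reflectionisgood}.

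Everything thus rests on condition \ref{item:speedup1}, and this is where I expect the real difficulty. In the RE setting this step is free: Lemma \ref{lemm:REpair} puts \emph{all theorems of $S$} among the axioms of $T$, so the true sentence $\Pr_S(h(\numeral{x}),\code{\delta(\numeral{x})})$ (length $\oo{\log x}$) is an axiom with a one-line $T$-proof. That move is exactly what breaks here, since ``the theorems of $S$'' is $\Sigma^0_1$, not poly-time. The naive poly-time route — exhibit the bounded proof — is hopeless, because under the antecedent the $S$-proof of $\delta(\numeral{x})$ has length $h(x)=x^{l+1}$, so merely transcribing it costs $x^{l+1}\gg\oo{x}$ symbols. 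One is tempted to invoke provable $\Sigma^b_1$-completeness (Theorem \ref{theorem:sigmacomplete}) to secure a proof of length polynomial in $\log x$, but it does not apply: for a bound $h$ polynomial in $x$, the formula $\Pr_S(h(x),\code{\delta(\dot x)})$ is \emph{not} $\Sigma^b_1$, because a proof of length $h(x)=x^{l+1}$ has code of magnitude about $2^{x^{l+1}}$, which is bounded by no term of the language (terms built from $+,\cdot,|\cdot|,\omega_1$ reach only quasipolynomial magnitude $2^{(\log x)^{\oo{1}}}$). With no term to bound the proof-quantifier, the completeness machinery is simply unavailable, and certifying $\Pr_S(h(\numeral{x}),\code{\delta(\numeral{x})})$ in $\oo{x}$ steps becomes the bare problem of cheaply \emph{witnessing} a short $S$-proof without writing it down.

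The attack I would pursue is therefore to replace the ``all theorems of $S$'' axiomatization by a poly-time axiom set that still lets $T$ certify short $S$-provability of $\delta(\numeral{x})$ cheaply: concretely, to search for a different self-referential $\delta$ paired with poly-time axioms encoding a bounded \emph{short-proof reflection} principle, so that $\Pr_S(h(\numeral{x}),\code{\delta(\numeral{x})})$ becomes $T$-derivable in $\oo{x}$ steps. This is the programme flagged at the end of the preceding section, and clearing it amounts to circumventing the search for short proofs of provable formulas; I regard establishing condition \ref{item:speedup1} for a poly-time $T$ as the single main obstacle, and it is the reason the statement is posed as a conjecture rather than proved outright.
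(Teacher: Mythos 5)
You have not proved Conjecture \ref{conj:desirable}, but neither does the paper: the statement is posed there as an open problem, and your analysis essentially reconstructs, in more detail, the paper's own closing discussion of why the RE-case argument of Section \ref{section:ReFacopsDoNotExist} does not transfer to poly-time theories. As far as it goes, your reduction is sound. With $h(x)=x^{l+1}$ the conclusion $||\cons_T(\numeral{x})||_S\geq h(\mathcal{O}(x))$ of Lemma \ref{lemm:mainidea} does yield $||\cons_T(\numeral{x})||_S>x^l$ for all sufficiently large $x$; your candidate $T=\sonetwo+\{\forall\vec{x}\,(\Box_S\varphi(\dot{\vec{x}})\to\varphi(\vec{x}))\}$ is sound and poly-time recognizable, since the reflection schema is a fixed syntactic template over the fixed $\Delta^b_1(\sonetwo)$ proof predicate of $S$; and Lemma \ref{lemm:reflectionisgood} correctly collapses the two hypotheses of Lemma \ref{lemm:mainidea} to condition \ref{item:speedup1} alone. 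Your diagnosis of why every known route to \ref{item:speedup1} fails is also accurate: Lemma \ref{lemm:REpair} works only by taking \emph{all theorems of} $S$ as axioms, which is genuinely $\Sigma^0_1$ and destroys poly-time axiomatizability (exactly the point the paper makes); transcribing the $S$-proof costs ${\rm poly}(h(x))\gg\oo{x}$ symbols, and running Lemma \ref{lemm:mainidea} with a ${\rm poly}(h(x))$ bound in place of $\oo{x}$ only recovers the $x^{\epsilon}$, $\epsilon<1$, bound of Theorem \ref{pudlakfriedman}; and Theorem \ref{theorem:sigmacomplete} is inapplicable because a proof of length $x^{l+1}$ has G\"odel number of magnitude about $2^{x^{l+1}}$, which no term of the language of bounded arithmetic majorizes.

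The genuine gap is thus exactly the one you name and do not close: establishing, $S$-verifiably, that $S\bewijs{h(x)}\delta(\numeral{x})$ implies $T\bewijs{\oo{x}}\Pr_S(h(\numeral{x}),\code{\delta(\numeral{x})})$ for a poly-time $T$ --- that is, certifying short $S$-provability in $\oo{x}$ symbols without exhibiting the proof. This remains open, and your proposed programme (other fixed points, or poly-time axioms encoding a bounded short-proof reflection principle) coincides with the paper's own suggestion in its final section. Two small points to tidy up if you present this as a partial result: Lemma \ref{lemm:mainidea} assumes $S\supseteq\sonetwo$, whereas the conjecture quantifies over \emph{all} sound $\Delta^b_1(\sonetwo)$ theories $S$, so either add that hypothesis or argue it away; and your text should be framed explicitly as a reduction of the conjecture to condition \ref{item:speedup1} for the reflection theory, not as a proof.
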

It is clear that Conjecture \ref{conj:desirable} 
is a desirable result as it is just one step away of the required
\[
\exists^{\Delta^b_1(S^1_2)} S\, \forall^{\Delta^b_1(S^1_2)} T\, \forall l\, \  
||\cons_T(\numeral{x}) ||_S > x^l.
\]
And this last step suggests some compactness or diagonalization argument. However, poly-time diagonalization seems to be the
hard problem at the core of the ${\sf P} \neq {\sf NP}$-problem.

We would like to conclude this paper by an easy but interesting observation. Mathematical practice has proved that it is very hard to find strong lower bounds for classical propositional logic. Actually the state of the art is still stuck at a quadratic lower bound. The following observation might be an explanation for this fact.

\begin{observation}
If optimal proof systems do exist, then any poly-time recognizable sequence of tautologies has  polynomially bounded proofs. Hence, if moreover $\sf coNP\neq NP$, any hard tautology is not poly-time recognizable in this case.
\end{observation}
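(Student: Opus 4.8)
The plan is to prove the two assertions in turn, the second being essentially the contrapositive of the first combined with the non-existence of super proof systems under ${\sf coNP}\neq{\sf NP}$.

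First I would fix an opps $P$ and let $A$ be a poly-time recognizable set of tautologies (the ``sequence''). I would build an auxiliary pps $Q$ that admits the tautology itself as a proof whenever the tautology is recognized to lie in $A$, and otherwise falls back to any fixed baseline pps $R$ (for instance the truth-table evaluation system, which is a legitimate, if non-super, pps). Concretely, $Q$ accepts a proof tagged $0$ followed by $\tau$ as a proof of $\tau$ exactly when the poly-time membership test confirms $\tau\in A$ --- this is sound because $A\subseteq{\sf Taut}$ --- and accepts a proof tagged $1$ followed by an $R$-proof as a proof of whatever $R$ proves. Since $A$ is poly-time and $R$ is a pps, $Q$ is a poly-time mapping that is both into and onto the set of all tautologies, hence a pps; and for $\tau\in A$ the first clause furnishes a $Q$-proof of length linear in $|\tau|$.

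Then, because $P$ is optimal, $P\geq Q$, so there is a polynomial $p$ with $P\geq_{p(x)}Q$. Applying this to the linear-size $Q$-proofs of the elements of $A$ yields, for every $\tau\in A$, a $P$-proof of $\tau$ of length at most $p(\mathcal{O}(|\tau|))$, which is polynomial in $|\tau|$. This establishes the first claim: every poly-time recognizable sequence of tautologies has polynomially bounded proofs in the opps $P$.

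For the second claim I would add the hypothesis ${\sf coNP}\neq{\sf NP}$. By Theorem \ref{theorem:CookReckhow} there is then no super pps, so in particular the opps $P$ is not super; hence there exists a sequence of tautologies whose shortest $P$-proofs grow faster than every polynomial --- precisely the \emph{hard tautologies}. Taking the contrapositive of the first claim, any such hard sequence fails to be poly-time recognizable, for otherwise it would have polynomially bounded $P$-proofs. This is exactly the assertion that under ${\sf coNP}\neq{\sf NP}$ a hard tautology cannot be poly-time recognizable, and it explains why one cannot hope to prove opps lower bounds using explicitly (poly-time) described tautology families.

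I do not expect a genuine obstacle here; the argument is a direct exploitation of optimality, and the assumption ${\sf coNP}\neq{\sf NP}$ serves only to guarantee that hard tautologies exist at all (otherwise the second statement is vacuous). The sole place demanding a little care is the construction of $Q$: one must check that prepending the tautology-as-its-own-proof clause keeps $Q$ poly-time and still surjective onto all tautologies, and fix a representation that glues the two modes of $Q$ together without breaking poly-time checkability --- the same minor coding issue flagged in the footnote to the proof of Theorem \ref{theorem:superupps}.
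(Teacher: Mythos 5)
Your proof is correct, and since the paper states this Observation without any proof (it is offered as an ``easy but interesting observation''), there is no paper argument to diverge from: your route --- turn the poly-time set $A$ of tautologies into a pps $Q$ by accepting $\tau$ itself (suitably tagged) as a proof of $\tau$ whenever the poly-time test confirms $\tau\in A$, keep a baseline system for surjectivity, invoke optimality $P\geq_{p(x)}Q$ to get polynomial-size $P$-proofs for all of $A$, and then take the contrapositive, with ${\sf coNP}\neq{\sf NP}$ entering only through Theorem \ref{theorem:CookReckhow} to rule out a super pps and thus guarantee that hard tautologies exist at all --- is exactly the intended standard argument. The lone coding caveat you flag (gluing the two modes of $Q$ while preserving poly-time checkability and the onto-ness required by Definition \ref{defi:pps}) is the same harmless representation issue the paper itself defers to a canonical coding in the footnote to the proof of Theorem \ref{theorem:superupps}, so your proposal can stand as written.
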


\paragraph{Acknowledgments}
First of all, I would like to thank Pavel Pudl\'ak for help, questions, and improvements. Next, there are many other people that I would like to mention here: Christian Bennet, Peter van Emde Boas, Harry Buhrman, Rosalie Iemhoff, Emil Je{\v{r}}{\'a}bek, Jan Kraj\'{\i}\v{c}ek, Neil Thapen, Leen Torenvliet, Alan Skelley, V{\'\i}t\v ezslav \v Svejdar, Oleg Verbitsky, and Albert Visser. 


\end{document}